\newtheorem{claim}{\bf \t}[part]
\theoremstyle{plain}
\newtheorem{Theorem}{Theorem}[section]
\newtheorem{Lemma}{Lemma}[section]
\theoremstyle{definition}
\newtheorem{Definition}{Definition}[section]
\theoremstyle{remark}
\newtheorem{Remark}{Remark}[section]
\theoremstyle{definition}
\numberwithin{equation}{section}
\def\R{\mathbb{R}}
\begin{document}
\title[Isometric Immersion of Surface and Lax-Friedrichs scheme]
{Isometric Immersion of Surface with Negative Gauss Curvature and the Lax-Friedrichs Scheme}
\author{Wentao Cao \and Feimin Huang \and Dehua Wang}

\address{W. Cao, Institute of Applied Mathematics, AMSS, CAS, Beijing 100190, China.}
\email{cwt@amss.ac.cn}

\address{F. Huang, Institute of Applied Mathematics, AMSS, CAS, Beijing 100190, China.}
 \email{\tt fhuang@amt.ac.cn}

\address{D. Wang, Department of Mathematics, University of Pittsburgh,
                Pittsburgh, PA 15260, USA.}
\email{\tt dwang@math.pitt.edu}

\keywords{Isometric immersion, Gauss-Codazzi equations, Lax-Friedrichs scheme,
$L^\infty$ large solution, uniform estimate, compensated compactness}

\subjclass[2000]{53C42, 53C21, 53C45, 58J32, 35L65, 35M10, 35B35}
\date{\today}
\thanks{}

\begin{abstract}
The isometric immersion of two-dimensional Riemannian manifold with negative Gauss curvature into the three-dimensional Euclidean space is considered through  the Gauss-Codazzi equations for the first and second fundamental forms.
The large $L^\infty$ solution is obtained which leads to a $C^{1,1}$ isometric immersion.  The approximate solutions are constructed by the Lax-Friedrichs finite-difference scheme with the fractional step. The uniform estimate is established by studying the equations satisfied by the Riemann invariants and using the sign of the nonlinear part. The $H^{-1}$ compactness is also derived. A compensated compactness framework is applied to obtain the existence of large $L^\infty$ solution to the Gauss-Codazzi equations for the surfaces more general  than those in literature.
\end{abstract}

\maketitle

\section{Introduction}

The isometric embedding or immersion of two-dimensional Riemannian manifolds  into the three-dimensional Euclidean space is a classical problem in geometry. It can be formulated as an elliptic problem if the Gauss curvature is positive, a hyperbolic problem if the curvature is negative, and a mixed elliptic-hyperbolic type problem if the curvature changes signs. This problem has been extensively studied in literature and we refer the readers to the book \cite{HH}, the papers \cite{Han,H,Lin} and the references therein.
In particular, there have been many studies on the elliptic case of the isometric embedding problem with positive curvature; see the book \cite{HH}. For the case of mixed type when the curvature changes signs, Lin in \cite{Lin} and Han in \cite{Han} obtained the local isometric embedding of surfaces when the  Gauss curvature changes signs cleanly using the approach of the symmetric positive differential system.
For the hyperbolic case with negative curvature,  Hong in \cite{H} proved  the smooth  isometric embedding  of surfaces when the  Gauss curvature decays at  certain rate in the time-like direction and  the  $C^1$ norm of  the initial data  is small; the isometric immersion with large data was obtained in Chen-Slemrod-Wang \cite{CSW} by a fluid dynamic formulation and a vanishing viscosity method for catenoid type surfaces, and in Cao-Huang-Wang  \cite{CHW} by the artificial viscosity method for both the catenoid and helicoid type surfaces; and the isometric immersion with small BV data was also considered in Christoforou \cite{Christoforou}.
The purpose of this paper is to study the isometric immersion with large data for more general surfaces with negative curvatures.
We remark that the results in \cite{CSW,CHW} and this paper hold for decay of order $t^{-4}$  in the Gauss curvature, and
the method of compensated compactness was recently applied in Christoforou-Slemrod \cite{ChSlemrod} to the isometric immersion of surfaces with rough data and slowly decaying negative Gauss curvature in the order $t^{-2-\delta/2}, \, \delta\in(0,4)$,  the same decay rate as in \cite{H}.

The classical surface theory indicates that  the isometric embedding or immersion can be realized if the first fundamental form and the second fundamental form of surfaces satisfy the Gauss-Codazzi equations (cf. \cite{BS,M1,M2,PS}).
When the curvature is negative, the Gauss-Codazzi system is a $2\times2$ hyperbolic system of balance laws. Our goal of this paper is to obtain the large $L^\infty$ solution of the Gauss-Codazzi system so that the $C^{1,1}$ isometric immersion can be established (\cite{M1,M2}). As mentioned above, the large $L^\infty$ solution of the Gauss-Codazzi system was obtained in \cite{CHW, CSW} for certain type surfaces with special given metrics. To obtain the large $L^\infty$ solution for more general given metrics of surfaces,
we shall reformulate the system as an inhomogeneous system of Chaplygin gas dynamics, use the Lax-Friedrichs type finite-difference scheme with the fractional step (cf. \cite{DCL, HLY})  to construct the approximate solutions, and apply the compensated compactness framework in \cite{CSW} to show the convergence and obtain the global large $L^\infty$ solution of the system.
As in \cite{CHW,CSW}, the hardest and most important part is  to establish the  $L^\infty$ uniform estimate. In this paper  we shall take the advantage of the sign of nonlinear part in the discrete ordinary differential equations (ODEs) satisfied by the Riemann invariants to obtain the uniform $L^\infty$ estimate via the invariant regions. One of our observations is that
the invariant regions are contained only in the region where the two Riemann invariants have different signs, which is also true in
\cite{CHW,CSW}. Through the fractional Lax-Friedrichs scheme, we obtain the uniform $L^\infty$ bound of the approximate solutions with arbitrarily large initial data. Our results contain more manifolds than those in \cite{CHW}, in which only helicoid and catenoid type manifolds are included.
Then we apply the compensated compactness framework proved in \cite{CSW} for the Gauss-Codazzi system to obtain the existence of global weak solution in $L^\infty$ which yields an isometric immersion of surfaces.

The rest of the paper is organized as follows. In Section 2, we first normalize the state variables and reformulate the Gauss-Codazzi system as an  inhomogeneous system of Chaplygin gas dynamics. Then we  recall some basic properties of the Riemann solutions to the homogeneous Chaplygin gas dynamics, construct the approximate solutions to the inhomogeneous system with the help of Lax-Friedrichs scheme and the fractional step, and give  the definition of weak solutions. In Section 3, we establish the  $L^\infty$ estimate for more general surfaces  by  studying carefully the equations satisfied by the Riemann invariants. In Section 4, we prove the  $H^{-1}_{loc}$ compactness using the method of \cite{DCL}. In Section 5, combining the above two estimates and the compensated compactness framework in \cite{CSW}  we state and prove our main theorem on the existence of weak solution to the Gauss-Codazzi system.

\bigskip

\section{Reformulation and Approximate Solutions}

For a two-dimensional Riemannian manifold  parametrized by $(x_1,x_2)=(x,y)$ in an open set of $\R^2$ with the given metric $g_{ij}, i, j=1, 2$, and the second fundamental form $h_{11}(dx)^2+ 2h_{12}dxdy + h_{22}(dy)^2$,
as in \cite{CSW, HH} the Gauss-Codazzi system for the isometric embedding of the two-dimensional Riemannian manifold into $\mathbb{R}^3$ is the following:
\begin{eqnarray}\label{GC}
\left\{
\begin{array}{lll}
 M_x-L_y=\Gamma^2_{22}L-\Gamma^2_{12}M+\Gamma^2_{11}N,\\
 N_x-L_y=-\Gamma^1_{22}L+\Gamma^1_{12}M-\Gamma^1_{11}N,\\
 LN-M^2=K,
\end{array}
\right.
\end{eqnarray}
where
$$
L=\frac{h_{11}}{\sqrt{|g|}}, \qquad M=\frac{h_{12}}{\sqrt{|g|}},
\qquad N=\frac{h_{22}}{\sqrt{|g|}},
$$
$|g|=det(g_{ij})=g_{11}g_{22}-g_{12}^2$, $K$ is the Gauss
curvature which is equal to the following:
$$
K=\frac{g_{2l}}{|g|}\left(\partial_1\Gamma^{l}_{12}-\partial_2\Gamma^{l}_{11}
+\Gamma^{i}_{12}\Gamma^{l}_{i1}-\Gamma^{i}_{11}\Gamma^{l}_{i2}\right),
$$
 and
$$
\Gamma_{ij}^{l}=\frac12g^{lr}\left(\partial_j g_{ir}+\partial_i g_{jr}-\partial_r g_{ij}\right)
$$
is the Christoffel symbol with $(g^{lr})$
denoting the inverse of $(g_{ij})$ and
$(\partial_1,\partial_2)=(\partial_x, \partial_y)$.
Here the Einstein summation convention is used.

In this paper, we consider the isometric embedding
of a complete simply connected two-dimensional manifold with negative Gauss curvature
$$K=-k^2$$ for some positive smooth function $k$. There is a global geodesic coordinate system (\cite{HH})
and the metric of the manifold  is $$ds^2=dy^2+B(x,y)^2dx^2,$$
where $B(x,y)>0$ satisfies
\begin{eqnarray}\label{B}
\left\{
\begin{array}{lllll}
B_{yy}=k^2B,\\
B(x,0)=1, \quad B_y(x,0)=0.
\end{array}
\right.
\end{eqnarray}
Therefore $B_y(x,y)\leq B_y(x,0)=0$ for $y\leq 0,$
and $|g|=B^2.$

For any given $T>0$,  we set $y=t-T$ for $t\in[0, T],$ and
$$\tilde{L}=\frac{L}{k},\quad
\tilde{M}=\frac{M}{k},\quad \tilde{N}=\frac{N}{k}.$$
Then the Gauss-Codazzi system \eqref{GC} becomes
\begin{equation}\label{GC2}
\left\{
\begin{array}{lll}
 \tilde{M}_x-\tilde{L}_t=\tilde{\Gamma}^2_{22}\tilde{L}-
 2\tilde{\Gamma}^2_{12}\tilde{M}+\tilde{\Gamma}^2_{11}\tilde{N},\\
 \tilde{N}_x-\tilde{M}_t=-\tilde{\Gamma}^1_{22}\tilde{L}+
 2\tilde{\Gamma}^1_{12}\tilde{M}-\tilde{\Gamma}^1_{11}\tilde{N},\\
 \tilde{L}\tilde{N}-\tilde{M}^2=-1,
\end{array}
\right.
\end{equation}
where
\begin{eqnarray*}
&&\tilde{\Gamma}^1_{11}=\Gamma^1_{11}+\frac{k_x}{k}= \frac{B_x}{B}+\frac{k_x}{k}, \quad
\tilde{\Gamma}^2_{11}=\Gamma^2_{11}=-BB_t,\\
&&\tilde{\Gamma}^1_{12}=\Gamma^1_{12}+\frac{k_t}{2k}=\frac{B_t}{B}+\frac{k_t}{2k}, \quad
\tilde{\Gamma}^2_{12}=\Gamma^2_{12}+\frac{k_x}{2k}=\frac{k_x}{2k},\\
&&\tilde{\Gamma}^1_{22}=\Gamma^1_{22}=0, \quad
\tilde{\Gamma}^2_{22}=\Gamma^2_{22}+\frac{k_t}{k}=\frac{k_t}{k}.
\end{eqnarray*}

Furthermore,  we let $\tilde{L}=\rho$, $\tilde{M}=-m$, then $\tilde{N}=\frac{m^2-1}{\rho}$.
The above system \eqref{GC2} can be rewritten as the following inhomogeneous system of
Chaplygin gas:
\begin{eqnarray}\label{Chap}
\left\{ \begin{array}{llll}
\displaystyle \rho_t+m_x=R(\rho,m,x,t):=-\rho\frac{k_t}{k}-2m\frac{k_x}{2k}-\frac{m^2-1}{\rho}(-BB_t),\\
\displaystyle  m_t+\left(\frac{m^2-1}{\rho}\right)_x=S(\rho,m,x,t):=-2m\left(\frac{B_t}{B}+\frac{k_t}{2k}\right)-
\frac{m^2-1}{\rho}\left(\frac{B_x}{B}+\frac{k_x}{k}\right),
\end{array}
\right.
\end{eqnarray}
or
\begin{eqnarray}\label{Chap2}
\left\{\begin{array}{ll}
U_t+f(U)_x=H(U,x,t),\\
U=(\rho,m)^\top, \quad f(U)=(m,(m^2-1)/\rho)^\top,\\
H(U,x,t)=(R(\rho,m,x,t),S(\rho,m,x,t))^\top.
\end{array}
\right.
\end{eqnarray}
We remark that  the $L^\infty$ weak solution of the Gauss-Codazzi system \eqref{GC} is equivalent to
the $L^\infty$ weak solution of inhomogeneous system of
Chaplygin gas \eqref{Chap}.  

\subsection{Riemann solutions of the homogenous system}
The following Cauchy problem of the homogenous system associated with
(\ref{Chap}):
\begin{eqnarray}\label{HO}
\left\{\begin{array}{ll}
{\rho}_t+m_x=0,\\
m_t+(\frac{m^2-1}{\rho})_x=0,\\
(\rho(x,0),m(x,0))=(\rho_0(x),m_0(x)),
\end{array}\right.
\end{eqnarray}
was studied in \cite{P} through the Euler-Lagrange transform, where
 the entropy solution  was obtained and it was shown that the solution has the same regularity as that of  the
 initial data.
 The Remainian invariants $w, z$
 and the eigenvalues $\lambda_i,i=1,2$
 of (\ref{HO}) are
 $$\lambda_1=z=\frac{m-1}{\rho}, \quad \lambda_2=w=\frac{m+1}{\rho},$$
and  we have
 \begin{eqnarray}\label{HOR}
\left\{\begin{array}{ll}
w_t+zw_x=0,\\
z_t+wz_x=0,\\
(w(x,0),z(x,0))=(w_0(x),z_0(x))=\left(\frac{m_0(x)+1}{\rho_0(x)},\frac{m_0(x)-1}{\rho_0(x)}\right).
\end{array}
\right.
\end{eqnarray}
If $0<\rho_0(x)\leq C$ and  $|m_0(x)|\leq C$ for some constant $C>0$, we have $w_0, z_0\in L^\infty(\mathbb{R})$ and
$\inf_xw_0(x)>\sup_xz_0(x)$, then the entropy solution in $L^\infty$ of (\ref{HO}) is equivalent to
the entropy solution in $L^\infty$ of the following problem:
\begin{eqnarray}\label{HORL}
\left\{\begin{array}{ll}
\tilde{w}_s-\tilde{w}_\xi=0,\\
\tilde{z}_s+\tilde{z}_\xi=0,\\
(\tilde{w}(s,\xi),\tilde{z}(s,\xi))=(w(t,x),z(t,x)),
\end{array}
\right.
\end{eqnarray}
where $$d\xi=\frac{2}{w-z}dx-\frac{w+z}{w-z}dt, \quad s=t.$$
 Let
\begin{equation}
Y_0(x)=\int_0^x\frac{2}{w_0(y)-z_0(y)}dy, \quad X_0(\xi)=Y_0^{-1}(\xi).
\end{equation}
Then $$\tilde{w}(s,\xi)=w_0(X_0(\xi+s)), \quad \tilde{z}(s,\xi)=z_0(X_0(\xi-s)),$$
and $$x=X(t,y), \quad dx=\frac{w-z}{2}d\xi+\frac{w+z}{2}dt$$ satisfying
$X(0,\xi)=X_0(\xi)$.
In addition, $$X(t,\xi)=\frac{1}{2}\int_0^{\xi+t}w_0(X_0(y))dy-
\frac{1}{2}\int_0^{\xi-t}z_0(X_0(y))dy.$$
For fixed $t$, define $Y(t,\cdot)=X^{-1}(t,\cdot)$. Then
$$w(t,x)=w_0(X_0(Y(t,x)+t)), \quad z(t,x)=z_0(X_0(Y(t,x)-t)).$$
Hence, if $w_0, z_0\in L^\infty(\mathbb{R})$, then
$w, z\in L^\infty(\mathbb{R}^+\times\mathbb{R})$, and moreover,
$w_0(x)$ and $w(x,t)$ have the same sign, and  $z_0(x)$ and $z(t,x)$ have the same sign.
Hence the Riemann solutions of (\ref{HO}) with piecewise constant initial data
are in the invariant region of the initial data.

More precisely, for the Riemann solution of (\ref{HO}) with the given initial data $(\rho_0, m_0)$,
the  state $(\rho, m)$   can only be connected to $(\rho_0, m_0)$ on the right
 by a contact discontinuity wave (CDW) since the hyperbolic system (\ref{HO}) is
linearly degenerate. The possible contact discontinuity waves are
\begin{eqnarray}
CDW_1:\qquad \frac{m-1}{\rho}=\frac{m_0-1}{\rho_0};\\
CDW_2:\qquad \frac{m+1}{\rho}=\frac{m_0+1}{\rho_0}.
\end{eqnarray}
Therefore, we have the following property on the Riemann solution:
\begin{Lemma}\label{lemma1}
The region $D=\{(\rho, m): 0\leq w\leq w_0, \; 0\geq z\geq z_0\}$ is the invariant
region of the Riemann problem, i.e. if $(\rho_0, m_0)\in D,$ the solutions of
the Riemann problem belong to $D$ too.
Moreover,  if $\{(\rho,m)(x):a<x<b\}\subset D,$
then
$$\left(\frac{1}{b-a}\int_a^b\rho(x)dx,\frac{1}{b-a}\int_a^bm(x)dx\right)\in D.$$
\end{Lemma}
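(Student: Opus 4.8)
The plan is to read the statement off from the structure of the Riemann solution of \eqref{HO} recalled above. Since \eqref{HO} is linearly degenerate, the solution of the Riemann problem with left state $U_\ell=(\rho_\ell,m_\ell)$ and right state $U_r=(\rho_r,m_r)$ consists of a $1$-contact $CDW_1$ joining $U_\ell$ to an intermediate state $U_m$, followed by a $2$-contact $CDW_2$ joining $U_m$ to $U_r$, and is piecewise constant with values in $\{U_\ell,U_m,U_r\}$. First I would note that $CDW_1$ preserves $z=\frac{m-1}{\rho}$ while $CDW_2$ preserves $w=\frac{m+1}{\rho}$, so that $z_m=z_\ell$ and $w_m=w_r$. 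Hence, if $U_\ell,U_r\in D$, then $w_m=w_r\in[0,w_0]$ and $z_m=z_\ell\in[z_0,0]$, i.e. $U_m\in D$, and therefore the whole Riemann solution is $D$-valued. Here one uses that $D$ lies in the region $\{w\ge 0\ge z\}$ where the two Riemann invariants have opposite signs: this makes the speeds correctly ordered, $\lambda_1(U_m)=z_\ell\le 0\le w_r=\lambda_2(U_m)$, so the classical two-contact solution indeed exists (no vacuum or delta shock). Equivalently, the same conclusion follows from the sign- and range-preservation of $w,z$ recalled just before the lemma via the explicit Euler--Lagrange representation, since the values of $w(t,\cdot)$ and $z(t,\cdot)$ lie in the ranges of the initial profiles $w_0(\cdot)$ and $z_0(\cdot)$.

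For the averaging statement, the point I would isolate is that, although $w$ and $z$ are nonlinear functions of $(\rho,m)$, the region $D$ is \emph{convex in the conservative variables} $(\rho,m)$. Indeed, since $\rho>0$, the four defining inequalities of $D$ are equivalent to the linear ones $m\ge -1$, $\,w_0\rho-m\ge 1$, $\,m\le 1$, $\,m-z_0\rho\ge 1$, so $D$ is the intersection of four closed half-planes, hence closed and convex; adding the second and fourth gives $(w_0-z_0)\rho\ge 2$, so in fact $\rho\ge 2/(w_0-z_0)>0$ holds automatically on $D$ and the constraint $\rho>0$ need not be imposed separately. Then Jensen's inequality, applied to each of these four linear functionals of $(\rho,m)$, yields $\big(\tfrac1{b-a}\int_a^b\rho(x)\,dx,\ \tfrac1{b-a}\int_a^b m(x)\,dx\big)\in D$ whenever $(\rho,m)(x)\in D$ for $a<x<b$, which is the assertion.

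I expect the only genuinely new input to be the convexity observation; once it is in place, both parts reduce to short, routine verifications. The one point needing mild care is confirming that the relevant Riemann solution is the classical two-contact one, and this is precisely what the inclusion $D\subset\{w\ge 0\ge z\}$ guarantees, through $\lambda_1(U_m)\le 0\le\lambda_2(U_m)$.
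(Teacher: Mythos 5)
Your proof is correct and is essentially the same argument the paper has in mind, which it merely sketches in one sentence (invariance via the two-contact-wave structure recalled from \cite{P}, as depicted in Figure~\ref{figure-contact}, combined with convexity of $D$ and Jensen's inequality). You have simply filled in the details the paper omits: the observation that the intermediate state has $z_m=z_\ell$, $w_m=w_r$ so that $D$ is preserved, and the explicit rewriting of the four defining inequalities of $D$ as linear constraints in $(\rho,m)$, which makes the convexity plain.
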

The lemma can be proved by combining Figure \ref{figure-contact} and \cite{P}, since the invariant
region is convex  and thus Jensen's inequality can be applied. We omit the details.

\begin{figure}
\begin{center}
  \includegraphics[width=12cm,height=8cm]{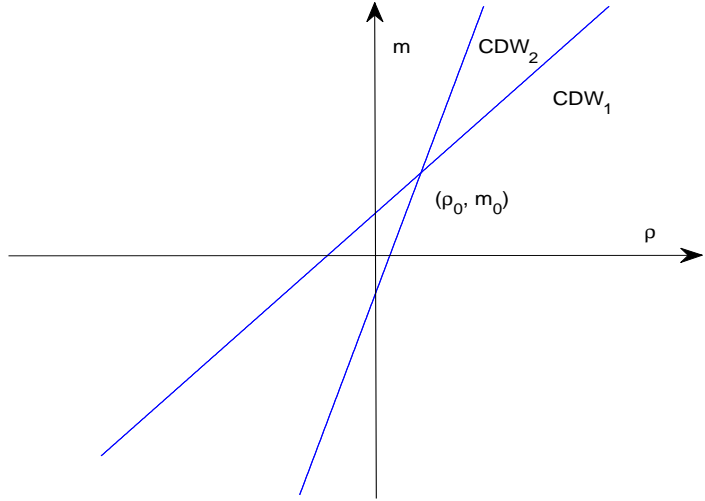}\\
  \caption{Graph of contact discontinuity waves}\label{figure-contact}
  \end{center}
\end{figure}

\subsection{Construction of the  approximate solutions}

We now use the Lax-Friedrichs scheme with the fractional step to construct the approximate solutions
of the initial-value problem of  (\ref{Chap}) (or \eqref{Chap2}) for $x\in\mathbb{R}, t\in [0,T]$ with any given $T>0$.
This is equivalent to solving (\ref{GC}) for $y\in[-T,0], x\in\mathbb{R}$.

Let $h>0, l>0$ be the time and space meshes respectively, satisfying the CFL  condition:
$$\max_{i=1,2}\lambda_i(\rho^l,m^l)<l/2h\leq \Lambda$$
for some constant $\Lambda>0$. Denote
\begin{eqnarray*}
&&S_n=\{(x,t): nh\leq t<(n+1)h\},\\
&&I_n=\{j: n+j \text{ is even}\}, \\
&&Q_{nj}=\{(x,t): (j-1)l<x<x(j+1)l, \; nh\leq t<(n+1)h,  \; j+n \text{ is even}\}.
\end{eqnarray*}
for any integers $j,n\geq0$.

We construct  the approximate solutions $U^l=(\rho^l,m^l)$  of the  initial-value problem of \eqref{Chap2} with initial data $U(x, 0)=U_0(x)$ as follows.

\emph{Firstly}, for $ (x,t)\in Q_{0j}$, we define
$$U^l(x,t)=U^l_R(x,t)+H(U^l_R(x,t),x,t)t,$$
where $U^l_R(x,t)=(\rho^l_R(x,t),m^l_R(x,t))$ is the Riemanian solution of (\ref{HO})
with the initial data
\begin{eqnarray}
U^l_R(x,0)=\left\{\begin{array}{ll}
U^l_R((j-1)l,0), \; x<jl;\\
U^l_R((j+1)l,0), \; x>jl.
\end{array}
\right.
\end{eqnarray}
Here $U^l_R(x,0)=U_0(x)\chi_l(x)$, and
$\chi_l(x)$ is the characteristic function of
$[-1/l,1/l].$
Define
$$ U^1_j=\frac{1}{2l}\int_{(j-1)l}^{(j+1)l}U^l(x,h-0)dx.$$
\par
\emph{Secondly}, suppose $U^l(x,t)$ has been defined for $t<nh$.
Take
$$U^{n}_j=\frac{1}{2l}\int_{(j-1)l}^{(j+1)l}U^l(x,nh-0)dx.$$
Then
for $(x,t)\in Q_{nj}$, we define
$$U^l(x,t)=U^l_R(x,t)+H(U^l_R(x,t),x,t,h)(t-nh),$$
where $U^l_R(x,t)=(\rho^l_R(x,t),m^l_R(x,t))$ is the Riemanian solution of (\ref{HO})
with the initial data
\begin{eqnarray}
U^l_R(x,nh)=\left\{\begin{array}{ll}
U^{n}_{j-1}, \; x<jl,\\
U^{n}_{j+1}, \; x>jl;
\end{array}\right.
\end{eqnarray}
and take
$$U^{n+1}_j=\frac{1}{2l}\int_{(j-1)l}^{(j+1)l}U^l(x,(n+1)h-0)dx.$$


In the rest of the paper, we shall show that the approximate solutions $U^l(x,t)$ constructed above have a subsequence converging to the weak solution $U(x,t)$ of the initial-value problem (\ref{Chap2}) with initial data $U_0(x)=(\rho_0(x), m_0(x))^\top$ for
$(x,t)\in\Pi_T=\mathbb{R}\times[0,T]$ in the following sense:
\begin{equation*}
\iint_{\Pi_T}\left(U(x,t)\varphi_t+f(U(x,t))\varphi_x
H(U(x,t),x,t)\varphi\right)dxdt+\int_\mathbb{R}U_0(x)\varphi(x,0)dx=0,
\end{equation*}
where $\varphi(x,t)$ is any smooth function whose support is compact in the region $\Pi_T$ and $\varphi(x, T)=0$.

\bigskip

\section{$ L^{\infty}$ Uniform Estimate}

In this section we shall establish the uniform bound of the approximate solutions using the
Riemann invariants. For the above approximate solutions, we denote the
Riemann invariants of the homogeneous system (\ref{HO}) by
 $$w^l_R=\frac{m^l_R+1}{\rho^l_R},\quad z^l_R=\frac{m^l_R-1}{\rho^l_R},$$
 and the Riemann invariants of  the original system (\ref{Chap}) by
$$w^l=\frac{m^l+1}{\rho^l},\quad z^l=\frac{m^l-1}{\rho^l}.$$
 Then for $nh\leq t<(n+1)h, n\geq0,$ one has
 \begin{eqnarray*}
w^l(x,t)&=&\frac{m^l_R+1+\left[-2m^l_R\big(\frac{B_t}{B}+\frac{k_t}{2k}\big)-
\frac{(m^l_R)^2-1}{\rho^l_R}\big(\frac{B_x}{B}+\frac{k_x}{k}\big)\right](t-nh)}
{\rho^l_R+\left[-\rho^l_R\frac{k_t}{k}-2m^l_R\frac{k_x}{2k}-\frac{(m^l_R)^2-1}{\rho^l_R}(-BB_t)\right](t-nh)}\\
&=&\frac{w^l_R-\left[(w^l_R+z^l_R)\big(\frac{B_t}{B}+\frac{k_t}{2k}\big)+
w^l_Rz^l_R\big(\frac{B_x}{B}+\frac{k_x}{k}\big)\right](t-nh)}
{1+\left[-\frac{k_t}{k}-(w^l_R+z^l_R)\frac{k_x}{2k}-w^l_Rz^l_R(-BB_t)\right](t-nh)},
\end{eqnarray*}
 \begin{eqnarray*}
z^l(x,t)&=&\frac{m^l_R-1+\left[-2m^l_R\big(\frac{B_t}{B}+\frac{k_t}{2k}\big)-
\frac{(m^l_R)^2-1}{\rho^l_R}\big(\frac{B_x}{B}+\frac{k_x}{k}\big)\right](t-nh)}
{\rho^l_R+\left[-\rho^l_R\frac{k_t}{k}-2m^l_R\frac{k_x}{2k}-\frac{(m^l_R)^2-1}{\rho^l_R}(-BB_t)\right](t-nh)}\\
&=&\frac{z^l_R-\left[(w^l_R+z^l_R)\big(\frac{B_t}{B}+\frac{k_t}{2k}\big)+
w^l_Rz^l_R\big(\frac{B_x}{B}+\frac{k_x}{k}\big)\right](t-nh)}
{1+\left[-\frac{k_t}{k}-(w^l_R+z^l_R)\frac{k_x}{2k}-w^l_Rz^l_R(-BB_t)\right](t-nh)}.\\
\end{eqnarray*}
Thus,
\begin{eqnarray*}
&&w^l=w^l_R+F(w^l_R,z^l_R,x,t,h)(t-nh),\\
&&z^l=z^l_R+F(z^l_R,w^l_R,x,t,h)(t-nh),
\end{eqnarray*}
where
\begin{eqnarray*}
F(w^l_R,z^l_R,x,t,h)&=&\frac{-(\frac{B_t}{B}-\frac{k_t}{2k})w^l_R-(\frac{B_t}{B}+\frac{k_t}{2k})z^l_R
-BB_t(w^l_R)^2(z^l_R)}{1-\left[\frac{k_t}{k}+\frac{k_x}{2k}(w^l_R+z^l_R)-BB_tw^l_Rz^l_R\right](t-nh)}\\
&&+\frac{-(\frac{B_x}{B}+\frac{k_x}{2k})w^l_Rz^l_R+\frac{k_x}{2k}(w^l_R)^2}
{1-\left[\frac{k_t}{k}+\frac{k_x}{2k}(w^l_R+z^l_R)-BB_tw^l_Rz^l_R\right](t-nh)}.
\end{eqnarray*}
It is challenging  to establish the uniform bound of $w^l(x,t), z^l(x,t)$ for general $B(x,t)$ and $k(x,t)$.
Assume that the given metric depends only on $t,$ and we still denote $k(x,t)=k(t), B(x, t)=B(t).$ We note $B'(t)\leq0$ as mentioned before. Then we have the following uniform estimate.

\begin{Theorem}\label{Th1}
Suppose that $k$ and $B$ depend only on $t$,
$\ln (B^2k)$ is nondecreasing  and  smooth,
and the initial data $(\rho_0(x), m_0(x))$ satisfies the following condition:
\begin{equation*}
\delta_0\leq w(x, 0)\leq P_0,~-P_0\leq z(x, 0)\leq0 \; (\text{or }
0\leq w(x, 0)\leq P_0,~-P_0\leq z(x, 0)\leq-\delta_0),
\end{equation*}
for some constants $\delta_0>0, P_0>0$ and all $x\in\mathbb{R}$.
Then there exists a constant $h_0$ such that,  for $h\leq h_0$, the  approximate solutions
are uniformly bounded in the region $\Pi_T=\mathbb{R}\times[0,T]$, i.e.,
there exist  constants $P(T),A(T)>0$ to be determinated later, such that
\begin{equation*}
\begin{split}
&\delta_0e^{-A(T)T}\leq w^l(x, t)\leq P(T),~-P(T)\leq z^l(x, t)\leq0\\
(\text{or }
&0\leq w^l(x, t)\leq P(T),~-P(T)\leq z^l(x, t)\leq-\delta_0e^{-A(T)T}).
\end{split}
\end{equation*}
\end{Theorem}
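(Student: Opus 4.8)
The plan is to prove the bounds by induction on the time level $n$, using the discrete recursion for the Riemann invariants and the structure of the nonlinear term $F$. The key observation, as emphasized in the introduction, is that the invariant region should live where the two Riemann invariants have opposite signs, i.e. where $w^l_R\ge 0$ and $z^l_R\le 0$ (or the reflected case). Assuming this sign condition at level $n$, the product $w^l_Rz^l_R\le 0$, so with $B'\le 0$ the term $-BB_tw^l_Rz^l_R\le 0$; together with $B\ge 1$ and $\ln(B^2k)$ nondecreasing (so $\frac{k_t}{k}\ge -2\frac{B_t}{B}\ge 0$ when combined appropriately), the denominator $1-[\frac{k_t}{k}+\frac{k_x}{2k}(w^l_R+z^l_R)-BB_tw^l_Rz^l_R](t-nh)$ stays close to $1$ and positive for $h$ small, since here $k_x=0$. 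First I would record these sign facts and the simplified form of $F$ when $B=B(t)$, $k=k(t)$: then
\begin{equation*}
F(w^l_R,z^l_R,t,h)=\frac{-(\tfrac{B_t}{B}-\tfrac{k_t}{2k})w^l_R-(\tfrac{B_t}{B}+\tfrac{k_t}{2k})z^l_R-BB_t(w^l_R)^2z^l_R}{1-\left[\tfrac{k_t}{k}-BB_tw^l_Rz^l_R\right](t-nh)}.
\end{equation*}

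Next I would establish the upper bound $w^l\le P(T)$ and the lower bound $z^l\ge -P(T)$. From the recursion $w^l=w^l_R+F(w^l_R,z^l_R,t,h)(t-nh)$ and the averaging step (which by Lemma~\ref{lemma1} preserves the relevant region for the homogeneous Riemann solver), one gets a discrete Gronwall-type inequality. The crucial point is the sign of the cubic term $-BB_t(w^l_R)^2z^l_R$: since $B_t\le 0$ and $z^l_R\le 0$, this term is $\le 0$, so it \emph{helps} control $w^l$ from above — this is the "sign of the nonlinear part" the authors advertise. One isolates from the numerator a linear-in-$w^l_R$ piece bounded by $C(1+\|w^l_R\|)$ and shows $w^l\le w^l_R(1+Ch)+Ch$, hence after $n\le T/h$ steps $w^l\le P_0 e^{CT}+(e^{CT}-1)=:P(T)$. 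The same estimate applied to $z^l$ (with $w^l_R,z^l_R$ swapped) gives $-P(T)\le z^l$. Throughout I would carry the bound $0<\delta_0 e^{-A(T)t}\le \rho^l$ so that $w^l_R,z^l_R$ remain finite; this is where one uses the lower bound $w^l\ge \delta_0 e^{-A(T)T}$.

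The more delicate half is propagating the \emph{signs} $w^l\ge \delta_0 e^{-A(T)t}>0$ and $z^l\le 0$, i.e. showing the approximate solution does not leave the opposite-sign region. For the upper bound $z^l\le 0$: when $z^l_R=0$ the numerator of $F$ reduces to $-(\tfrac{B_t}{B}-\tfrac{k_t}{2k})w^l_R$, and one needs this to be $\le 0$, i.e. $\tfrac{B_t}{B}-\tfrac{k_t}{2k}\ge 0$... but $B_t\le 0$, so in fact one needs a more careful argument: one shows that if $z^l_R$ is slightly negative the drift $F(z^l_R,w^l_R,\cdot)(t-nh)$ cannot push $z^l$ above $0$, using that near $z^l_R=0^-$ the dominant term $-(\tfrac{B_t}{B}+\tfrac{k_t}{2k})w^l_R\cdot$ wait — here one must instead use the full structure, bounding $z^l\le z^l_R(1+Ch)$ type estimate with the right sign so that $z^l_R\le 0\Rightarrow z^l\le 0$. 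Similarly for $w^l\ge \delta_0 e^{-A(T)t}$ one derives $w^l\ge w^l_R(1-A(T)h)$ for a suitable $A(T)$ absorbing the coefficients $\tfrac{B_t}{B}\pm\tfrac{k_t}{2k}$ and the cubic term (again $-BB_t(w^l_R)^2z^l_R\le 0$ is the wrong sign here and must be dominated by $w^l_R$ times a constant, using the already-established upper bounds on $w^l_R$, $|z^l_R|$). The averaging step preserves $w\ge 0$, $z\le 0$ by Lemma~\ref{lemma1} and preserves the quantitative lower bound by Jensen since the region $\{w\ge c, z\le 0, w\ge z\}\cap\{w\le P(T), z\ge -P(T)\}$ is convex. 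The main obstacle, and the heart of the argument, is choosing $A(T)$, $P(T)$ and $h_0$ consistently: the constants in the linear bounds depend on $\sup_{[0,T]}|\tfrac{B_t}{B}|$, $\sup_{[0,T]}|\tfrac{k_t}{k}|$, $\sup B$, and on $P(T)$ itself (through the cubic term), so one must close the loop — first fix $P(T)$ from a Gronwall bound that does not see the lower bound, then fix $A(T)$ in terms of $P(T)$, then choose $h_0$ small enough that all denominators stay near $1$ and the discrete Gronwall steps are valid on $[0,T]$.
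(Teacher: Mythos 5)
Your overall architecture — an a priori boundedness hypothesis, a discrete Gronwall iteration producing $P(T)$, a separate argument producing $A(T)$ for the lower bound, and Lemma~\ref{lemma1}/Jensen for the averaging step — matches the paper's proof. But the "delicate half," which you yourself flag as the heart of the argument (propagating the signs $w^l\ge 0$ and $z^l\le 0$), is exactly where your proposal contains both a computational error and a genuine gap.

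The computational slip: since $z^l=z^l_R+F(z^l_R,w^l_R,\cdot)(t-nh)$, with $F$ obtained by swapping its first two arguments, setting $z^l_R=0$ gives a numerator $-(\tfrac{B_t}{B}+\tfrac{k_t}{2k})w^l_R$, \emph{not} $-(\tfrac{B_t}{B}-\tfrac{k_t}{2k})w^l_R$. The hypothesis that $\ln(B^2k)$ is nondecreasing gives precisely $\tfrac{B_t}{B}+\tfrac{k_t}{2k}\ge 0$, so this term is $\le 0$ and the drift does not carry $z^l$ across zero; your hunt for a contradiction and the subsequent trailing off ("one must instead use the full structure") stem from the mistaken coefficient.

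The gap: you never actually derive the estimate $z^l\le z^l_R(1+Ch)$ you invoke, and the paper obtains it not by a crude bound but by an exact factorization you did not find. Using $z^l_R\le w^l_R$ together with $\tfrac{B_t}{B}+\tfrac{k_t}{2k}\ge 0$ to replace $w^l_R$ by $z^l_R$ in the middle term of the numerator of $F(z^l_R,w^l_R,\cdot)$ gives
\begin{equation*}
z^l\le z^l_R\Bigl[1-(t-nh)\,\frac{\tfrac{2B_t}{B}+BB_t\,w^l_Rz^l_R}{D(x,t,h)}\Bigr],
\end{equation*}
while the symmetric replacement $z^l_R\to w^l_R$ in the middle term of $F(w^l_R,z^l_R,\cdot)$ gives
\begin{equation*}
w^l\ge w^l_R\Bigl[1-(t-nh)\,\frac{\tfrac{2B_t}{B}+BB_t\,w^l_Rz^l_R}{D(x,t,h)}\Bigr],
\end{equation*}
with the \emph{same} scalar factor. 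Under the a priori bound this factor lies in $[\tfrac12,\tfrac32]$ once $h$ is small, so $z^l_R\le 0\Rightarrow z^l\le 0$ and $w^l_R\ge 0\Rightarrow w^l\ge 0$, and in Step 2 the same factor supplies $A(T)=\sup|\tfrac{2B_t}{B}+BB_tw^l_Rz^l_R|/D$. Without this factorization your induction cannot close: the sign condition $w^l_Rz^l_R\le 0$, which you use to control $D$ and the cubic term in the Gronwall step, is precisely what must be re-established at each time level, and a loose bound of the form $z^l\le z^l_R+Ch$ destroys the sign as soon as $|z^l_R|<Ch$.
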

\begin{Remark}
The condition that $\ln (B^2k)$ is nondecreasing plays essential role in Theorem \ref{Th1}.
$\frac{d}{dt}\ln (B^2k)\geq0$ is equivalent to, in the original variable $y$,
$$\frac{2B'}{B}+\frac{k'}{k}\geq0, \text{ when } y\leq0,$$
where $'$ denotes $\frac{d}{dy}.$ If we take
\begin{equation} \label{KB}
\frac{k'}{k}=a\frac{B'}{B},
\end{equation}
for some function $a=a(y)$, then
 $$\frac{2B'}{B}+\frac{k'}{k}=\frac{B'}{B}(a+2)\geq0,$$
if $a\leq-2$ due to the fact that $B'\leq0.$

When the function $a$ in \eqref{KB}  is constant, the corresponding manifold is a helicoid type surface, and this special case has been investigated in \cite{CHW} through the vanishing viscosity method and maximum principle.
However,  the function $a$ in \eqref{KB} is not constant in general, that is $a=a(y).$ Thus more manifolds are included. For example, take
\begin{equation*}
\begin{split}
&B(y)''=\frac{1}{(1+y^2)^2}, \quad B(y)'= \int_0^y\frac{ds}{(1+s^2)^2},\\ &B(y)=1+\int_0^y\int_0^\tau\frac{ds}{(1+s^2)^2}d\tau,\\
&k(y)=\sqrt{\frac{B''(y)}{B(y)}}=\frac{1}{\sqrt{(1+y^2)^2(1+\int_0^y\int_0^\tau\frac{ds}{(1+s^2)^2}d\tau)}}.
\end{split}
\end{equation*}
A simple calculation yields
\begin{equation*}
\begin{split}
a(y)&=\frac{k(y)'}{k(y)}\frac{B(y)}{B(y)'}=
\frac{\left(\frac{B(y)''}{B(y)}\right)'}{2\frac{B(y)''}{B(y)}}\frac{B(y)}{B(y)'}
=\frac{B(y)'''B(y)-B(y)''B(y)'}{2B(y)''B(y)'}\\
&=\frac{-4y-4y\int_0^y\int_0^\tau\frac{ds}{(1+s^2)^2}d\tau-(1+y^2)\int_0^y\frac{ds}{(1+s^2)^2}}
{2(1+y^2)\int_0^y\frac{ds}{(1+s^2)^2}}.
\end{split}
\end{equation*}
It is obvious that $a=a(y)$ is not constant (in fact $a'(y)\not\equiv 0$). Moreover,
$$a(y)+2=\frac{-4y-4y\int_0^y\int_0^\tau\frac{ds}{(1+s^2)^2}d\tau+3(1+y^2)\int_0^y\frac{ds}{(1+s^2)^2}}
{2(1+y^2)\int_0^y\frac{ds}{(1+s^2)^2}}$$
whose denominator is negative since $y<0,$ and numerator is
$$f(y)=-4y-4y\int_0^y\int_0^\tau\frac{ds}{(1+s^2)^2}d\tau+3(1+y^2)\int_0^y\frac{ds}{(1+s^2)^2}.$$
By a direct computation, we have
\begin{equation*}
\begin{split}
&f(y)'=-4-4\int_0^y\int_0^\tau\frac{ds}{(1+s^2)^2}d\tau+2y\int_0^y\frac{ds}{(1+s^2)^2}+\frac{3}{1+y^2},\\
&f(y)''=-2\int_0^y\frac{ds}{(1+s^2)^2}-\frac{4y}{(1+y^2)^2}\ge 0,
\end{split}
\end{equation*}
which implies $f(y)'\leq f(0)'=-1<0$ for $y\leq0$. Therefore $f(y)\geq f(0)=0$ and then $a(y)+2\leq0$. That is, $a(y)\le -2$ but $a(y)$ is not constant.

On the other hand, it is not easy to apply the vanishing viscosity method and maximum principle in \cite{CHW} for the case that $a(y)$ is not constant. In fact, the source term $\psi_1(\tilde{u},\tilde{v}), \psi_2(\tilde{u},\tilde{v})$ in \cite{CHW} are
\begin{equation*}
\begin{split}
&\psi_1(\tilde{u},\tilde{v})=\tilde{u}-(a(y)+1)\tilde{v}+(\tilde{u}^2-\tilde{v}^2)(\tilde{u}+\tilde{v}),\\
&\psi_2(\tilde{u},\tilde{v})=\tilde{u}+(a(y)+1)\tilde{v}+(\tilde{u}^2-\tilde{v}^2)(\tilde{u}-\tilde{v}).
\end{split}
\end{equation*}
Since $a(y)$ depends on $y,$ $\psi_1(\tilde{u},\tilde{v})=0$ and $\psi_2(\tilde{u},\tilde{v})=0$
are curves with parameter $y$ in $(\tilde{u}, {\tilde{v}})$ plane. Any square in Figure 3 or Figure 5
of \cite{CHW} shall not be the invariant region for the system. Thus, it is difficult to find an invariant region for $\tilde{u},\tilde{v}$ by the approach in \cite{CHW}.
This is why we adopt the fractional Lax-Friedrichs  method in this paper which allows the $L^\infty$ bound of the approximate solutions  to increase with  time $t$. That is,
the region for solution in the phase plane may expand with time $t$.
\end{Remark}

\begin{proof}
We  prove the theorem only for the case that
$$\delta_0\leq w(x, 0)\leq P_0,~-P_0\leq z_0(x)\leq0,$$
and the other case can be treated in the same way. The proof is divided into two steps.\par
\noindent\textit{Step 1:} We prove that there exists a constant $h_0>0$
 such that, when $h\leq h_0$, the Riemann invariants  satisfy the following:
\begin{equation}\label{Rie}
0\leq w^l(x,t)\leq P(T),\quad -P(T)\leq z^l(x,t)\leq0,
\end{equation}
for $(x,t)\in\mathbb{R}\times[0,T],$
where $$P(T)=P_0\frac{k(T)^2}{k(0)^2},$$ with $P_0=\max\{\sup_xw(x, 0),
-\inf_xz(x, 0)\}$.

Indeed, for $nh\leq t<(n+1)h$, from the assumption on $k$, one has
\begin{eqnarray*}
&&w^l=w^l_R+(t-nh)\frac{-(\frac{B'}{B}-\frac{k'}{2k})w^l_R-(\frac{B'}{B}+\frac{k'}{2k})z^l_R
-BB'(w^l_R)^2(z^l_R)}{1-\left[\frac{k'}{k}-BB'w^l_Rz^l_R\right](t-nh)},\\
&&z^l=z^l_R+(t-nh)\frac{-(\frac{B'}{B}-\frac{k'}{2k})z^l_R-(\frac{B'}{B}+\frac{k'}{2k})w^l_R
-BB'(z^l_R)^2(w^l_R)}{1-\left[\frac{k'}{k}-BB'w^l_Rz^l_R\right](t-nh)}.
\end{eqnarray*}
\par
We first assume  that  there exists a constant $h^*$ such that,  for $h\leq h^*$,
$$0\leq w^l(x,t)\leq 2P(T),\quad -2P(T)\leq z^l(x,t)\leq0.$$
Because of the above a priori assumption, there exists a constant $0<h_1=h_1(P(T))<h^*$ such that
for any $(x,t)\in\Pi_T$, when $h<h_1$,
$$h\left|\frac{k'}{k}-BB'w^l_Rz^l_R\right|\leq\frac{1}{2}.$$
And then
\begin{equation*}
D(x,t,h):=1-\left(\frac{k'}{k}-BB'w^l_Rz^l_R\right)(t-nh)\in\left[\frac{1}{2}, \frac{3}{2}\right].
\end{equation*}
\par
The estimate \eqref{Rie} holds for $t=0$ since we choose the initial data satisfying  the condition.
For $0<t<h,$ from the properties of the Riemann solution of the homogeneous system with the given initial data,
we see that
$$P(T)\geq P_0\geq w^l_R(x,t)\geq0, \quad -P(T)\leq-P_0\leq z^l_R(x,t)\leq0.$$
Since $\ln(B^2k)$ is nondecreasing  and $B$ is decreasing,
then $\frac{B'}{B}+\frac{k'}{2k}\geq0$ and  $\frac{B'}{B}-\frac{k'}{2k}\leq0$
for any $t\in[0,T]$.
Obviously, $z^l_R(x,t)\leq w^l_R(x,t),$
then we have
\begin{eqnarray*}
w^l&\geq& w^l_R+t\frac{-(\frac{B'}{B}-\frac{k'}{2k})w^l_R-(\frac{B'}{B}+\frac{k'}{2k})w^l_R
-BB'(w^l_R)^2(z^l_R)}{D(x,t,h)}\\
&=&w^l_R\left[1-t\frac{\frac{2B'}{B}+BB'w^l_Rz^l_R}{D(x,t,h)}\right],\\
z^l&\leq&z^l_R+t\frac{-(\frac{B'}{B}-\frac{k'}{2k})z^l_R-(\frac{B'}{B}+\frac{k'}{2k})z^l_R
-BB'(z^l_R)^2(w^l_R)}{D(x,t,h)}\\
&=&z^l_R\left[1-t\frac{\frac{2B'}{B}+BB'w^l_Rz^l_R}{D(x,t,h)}\right].
\end{eqnarray*}
From the priori assumption  of $w^l(x,t),  z^l(x,t),$
there exists a constant $h_2(P(T))>0$, such that,  when $0<h<h_2<h^*,$ for $(x,t)\in\Pi_T,$
$$\frac{h}{D(x, t, h)}\left|\frac{2B'}{B}+BB'w^l_Rz^l_R\right|\leq\frac{1}{2},$$
from which
$$1-t\frac{\frac{2B'}{B}+BB'w^l_Rz^l_R}{D(x,t,h)}
\geq 1-\frac{h}{D(x, t, h)}\left|\frac{2B'}{B}+BB'w^l_Rz^l_R\right|\geq\frac{1}{2}>0.$$
Hence $w^l(x, t)\geq0 ,z^l(x, t)\leq0 $  for  $t\in[0, h].$
Suppose that $w^l(x,t)\geq0$ and  $z^l(x,t)\leq0$ hold for $t<nh$. For $t\in[nh,(n+1)h)$,
we have
\begin{eqnarray}
w^l&\geq&w^l_R\left[1-(t-nh)\frac{\frac{2B'}{B}+BB'w^l_Rz^l_R}{D(x,t,h)}\right]\geq0,
\label{wl}\\
z^l&\leq&z^l_R\left[1-(t-nh)\frac{\frac{2B'}{B}+BB'w^l_Rz^l_R}{D(x,t,h)}\right]\leq0,
\label{zl}
\end{eqnarray}
when $0<h<h_2, $ since
$$1-(t-nh)\frac{\frac{2B'}{B}+BB'w^l_Rz^l_R}{D(x,t,h)}
\geq 1-\frac{h}{D(x, t, h)}\left|\frac{2B'}{B}+BB'w^l_Rz^l_R\right|\geq\frac{1}{2}>0.$$
Take $h_0=\min\{h_1, h_2\}<h^*$, and by induction, when $h<h_0$,   we conclude
that $w^l(x,t)\geq0$ and $z^l\leq0$ for $(x,t)\in\Pi_T.$ \par
Therefore, in $\Pi_T$, $w^l(x,t)\geq0, z^l(x,t)\leq0$, $-BB'(w^l_R)^2z^l_R\leq0.$
Then, for any $t\in[nh,(n+1)h)$,
\begin{equation*}
\begin{split}
w^l(x,t)&\leq w^l_R(x,t)+(t-nh)\frac{-(\frac{B'}{B}-\frac{k'}{2k})w^l_R(x,t)
-(\frac{B'}{B}+\frac{k'}{2k})z^l_R(x,t)}{D(x,t,h)},\\
z^l(x,t)
&\geq z^l_R(x,t)+(t-nh)\frac{-(\frac{B'}{B}-\frac{k'}{2k})z^l_R(x,t)
-(\frac{B'}{B}+\frac{k'}{2k})w^l_R(x,t)}{D(x,t,h)}\\
\end{split}
\end{equation*}
Since $D(x,t,h)$ tends to $1$ as $h$ tends to $0$, we can estimate
$$\frac{1}{D(x,t,h)}\in\left[\frac{2}{3}, 2\right]$$
under the a priori assumption.  In particular, from the construction of approximate solutions and
the property of Riemann solutions,
\begin{eqnarray*}
\sup_xw^l(x,(n+1)h-0)&\leq&\sup_xw^l_R(x,(n+1)h-0)\\
&&+2h\bigg[\left(\frac{k'}{2k}-\frac{B'}{B}\right)\sup_xw^l_R(x,(n+1)h-0)\\
&&-\left(\frac{B'}{B}+\frac{k'}{2k}\right)\inf_xz^l_R(x,(n+1)h-0)\bigg],\\
&\leq&\sup_xw^l(x,nh-0)\\
&&+2h\bigg[\left(\frac{k'}{2k}-\frac{B'}{B}\right)\sup_xw^l(x,nh-0)\\
&&-\left(\frac{B'}{B}+\frac{k'}{2k}\right)\inf_xz^l(x,nh-0)\bigg]
\end{eqnarray*}
and
\begin{eqnarray*}
\inf_xz^l(x,(n+1)h-0)&\geq&\inf_xz^l_R(x,(n+1)h-0)\\
&&+2h\bigg[\left(\frac{k'}{2k}-\frac{B'}{B}\right)\inf_xz^l_R(x,(n+1)h-0)\\
&&-\left(\frac{B'}{B}+\frac{k'}{2k}\right)\sup_xw^l_R(x,(n+1)h-0)\bigg]\\
&\geq&\inf_xz^l(x,nh-0)\\
&&+2h\bigg[\left(\frac{k'}{2k}-\frac{B'}{B}\right)\inf_xz^l(x,nh-0)\\
&&-\left(\frac{B'}{B}+\frac{k'}{2k}\right)\sup_xw^l(x,nh-0)\bigg].
\end{eqnarray*}
Let $P_n=\max\{\sup_xw^l(x,nh-0), -\inf_xz^l(x, nh-0)\}.$ Then we obtain
\begin{equation*}
\begin{split}
&\max\{\sup_xw^l(x,(n+1)h-0),-\inf_xz^l(x,(n+1)h-0)\}\\
&\leq P_n+2\left[\left(\frac{k'}{2k}-\frac{B'}{B}\right)+
\left(\frac{B'}{B}+\frac{k'}{2k}\right)\right]P_nh+O(h^2)\\
&=P_n+2\frac{k'}{k}P_nh.
\end{split}
\end{equation*}
Thus,
$$P_{n+1}\leq P_n+2\frac{k'}{k}P_nh.$$
Therefore, by induction, combining the ordinary differential equation
$$\frac{dP(t)}{dt}=2\frac{k(t)'}{k(t)}P(t),$$
whose solution is $P_0\frac{k(t)^2}{k(0)^2},$
one has
$$P_{n}\leq P(T)=P_0\frac{k(T)^2}{k(0)^2}.$$
Then for any $0<h<h_0,$ the  inequality (\ref{Rie}) follows.
\smallskip

\noindent \textit{Step 2:} We shall show the  lower bound of $w^l$,
 we see immediately
 from \eqref{wl}, along with the fact that $k(t), B(t)$ are given
$C^{1,1}$ functions in $[0,T]$, for $nh\leq t<(n+1)h$,
\begin{eqnarray*}
\inf_x w^l(x,t)&\geq&\inf_x w^l_R(x,t)\left[1-h\sup_x\frac{|\frac{2B'}{B}+BB'w^l_Rz^l_R|}{D(x,t,h)}\right]\\
&\geq&\inf_xw^l_R(x, t)(1-A(T)h),
\end{eqnarray*}
where $A(T)$ is defined as
$$A(T):=\sup_{(x, t)\in\Pi_T}\frac{|\frac{2B'}{B}+BB'w^l_Rz^l_R|}{D(x,t,h)}.$$
Hence
\begin{eqnarray*}
\inf_x w^l(x,(n+1)h-0)&\geq&\inf_x
w^l_R(x,(n+1)h-0)(1-A(T)h)\\
&\geq&\inf_xw^l(x,nh-0)(1-A(T)h.
\end{eqnarray*}
Let $W_n=\inf_x w^l(x, nh-0)),$ then
\begin{eqnarray*}
W_{n+1}&\geq&W_n(1-A(T)h)\geq w_0(1-A(T)h)^n\\
&\geq& w_0(1-A(T)h)^{T/h}\geq w_0e^{-A(T)T}.
\end{eqnarray*}
with $w_0=\inf_xw^l(x,0)\geq\delta_0.$
This completes the proof.
\end{proof}

\smallskip

\bigskip

\section{$H^{-1}_{loc}$ Compactness}

With  the uniform bound of the approximate solutions obtained in the previous Section,
we now prove the  $H^{-1}_{loc}$ compactness.

First, we recall the following embedding lemma in \cite{DCL}:
\begin{Lemma}\label{Th0}
If $\Omega\in\mathbb{R}^n$ is an open and bounded set, then
\begin{equation}
\begin{array}{ll}
(\text{compact set of } W^{-1,q}(\Omega))\cap(\text{compact set of } W^{-1,r}(\Omega))\\
\subset(\text{compact set of } W^{-1,2}(\Omega)),
\end{array}
\end{equation}
where $q$ and $r$ are  constants, $1<q\leq2<r.$
\end{Lemma}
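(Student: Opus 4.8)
The plan is to reduce the statement to a claim about sequences and then to lean on an interpolation inequality relating the three negative Sobolev norms. Since $\Omega$ is bounded, $W^{-1,2}(\Omega)$ is complete, so a subset is precompact exactly when every sequence in it admits a subsequence converging in $W^{-1,2}(\Omega)$; it therefore suffices to fix a set $S$ that is precompact in both $W^{-1,q}(\Omega)$ and $W^{-1,r}(\Omega)$ and an arbitrary sequence $\{f_k\}\subset S$, and to produce such a subsequence. First I would extract a subsequence along which $f_k\to f$ in $W^{-1,q}(\Omega)$ and then a further subsequence along which $f_k\to g$ in $W^{-1,r}(\Omega)$. Because $\Omega$ is bounded, H\"older's inequality gives $L^r(\Omega)\hookrightarrow L^q(\Omega)$ and hence $W^{-1,r}(\Omega)\hookrightarrow W^{-1,q}(\Omega)$ continuously, so uniqueness of limits forces $g=f$; writing $g_k:=f_k-f$ we thus have $g_k\to 0$ in $W^{-1,q}(\Omega)$ and $g_k\to 0$ in $W^{-1,r}(\Omega)$, and everything comes down to showing $g_k\to 0$ in $W^{-1,2}(\Omega)$.

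The heart of the matter is the interpolation inequality
\begin{equation*}
\|g\|_{W^{-1,2}(\Omega)}\leq C\,\|g\|_{W^{-1,q}(\Omega)}^{1-\theta}\,\|g\|_{W^{-1,r}(\Omega)}^{\theta},
\qquad \frac{1}{2}=\frac{1-\theta}{q}+\frac{\theta}{r},
\end{equation*}
valid for all $g\in W^{-1,q}(\Omega)\cap W^{-1,r}(\Omega)$, where $1<q\leq 2<r$ guarantees $\theta\in[0,1)$ and in particular $1-\theta>0$. Granting it, applying the inequality to $g_k$ and using $\|g_k\|_{W^{-1,q}(\Omega)}\to 0$ together with the boundedness of $\|g_k\|_{W^{-1,r}(\Omega)}$ gives $\|g_k\|_{W^{-1,2}(\Omega)}\to 0$, finishing the argument.

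For the interpolation inequality I would use that $W^{-1,p}(\Omega)=\bigl(W^{1,p'}_0(\Omega)\bigr)^{*}$ for $1<p<\infty$, and that $\{W^{1,p}_0(\Omega)\}_{1<p<\infty}$ forms a complex interpolation scale for \emph{every} open $\Omega$ — via extension by zero and the common dense core $C^\infty_c(\Omega)$ this reduces to the classical fact on $\mathbb{R}^n$. Dualizing this scale with Calder\'on's duality theorem for complex interpolation (the spaces are reflexive and $C^\infty_c(\Omega)$ is dense in each) yields $[\,W^{-1,q}(\Omega),\,W^{-1,r}(\Omega)\,]_{[\theta]}=W^{-1,2}(\Omega)$ with equivalent norms for the above $\theta$, and the fundamental inequality of complex interpolation then produces the displayed estimate. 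Alternatively one can sidestep abstract interpolation entirely: regard $W^{-1,p}(\Omega)$ as a quotient of $W^{-1,p}(\mathbb{R}^n)$, invert $I-\Delta$ on $\mathbb{R}^n$ (a Mikhlin multiplier, so $(I-\Delta)^{-1}\colon W^{-1,p}(\mathbb{R}^n)\to W^{1,p}(\mathbb{R}^n)$ is an isomorphism for $p=q,2,r$), and reduce the inequality to the log-convexity of $L^p$-norms, i.e. to plain H\"older's inequality applied to $u$ and $\nabla u$ with $u=(I-\Delta)^{-1}g$.

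The one point requiring genuine care is the interpolation identity $[\,W^{-1,q}(\Omega),\,W^{-1,r}(\Omega)\,]_{[\theta]}=W^{-1,2}(\Omega)$ for an arbitrary bounded open set carrying no boundary regularity; the resolution, as indicated, is that one works throughout with the spaces $W^{1,p}_0$ (and their duals), for which the interpolation-scale property survives on rough domains thanks to zero-extension, whereas the analogous statements for $W^{1,p}(\Omega)$, or for the Dirichlet Laplacian on $\Omega$ itself, would need $\partial\Omega$ to be regular. The remaining steps — the diagonal extraction of subsequences, the embedding $W^{-1,r}(\Omega)\hookrightarrow W^{-1,q}(\Omega)$, and passing to the limit — are routine.
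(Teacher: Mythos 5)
The paper does not actually prove this lemma; it is recalled verbatim from the reference \cite{DCL} and used as a black box. So there is no in-paper argument to compare yours against, and the relevant question is whether your proof is correct and what it proves.

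Two observations. First, read literally — ``compact in $W^{-1,q}(\Omega)$ \emph{and} compact in $W^{-1,r}(\Omega)$'' — the lemma is immediate and the $W^{-1,q}$ hypothesis is redundant: since $\Omega$ is bounded and $r>2$ implies $r'<2$, H\"older gives $W^{1,2}_0(\Omega)\hookrightarrow W^{1,r'}_0(\Omega)$ continuously with dense range ($C^\infty_c(\Omega)$ is dense in both), so dualizing yields a continuous injection $W^{-1,r}(\Omega)\hookrightarrow W^{-1,2}(\Omega)$, and a compact subset of $W^{-1,r}(\Omega)$ is therefore automatically compact in $W^{-1,2}(\Omega)$. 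You notice this embedding chain (in the form $W^{-1,r}\hookrightarrow W^{-1,q}$) but use it only to identify the two subsequential limits, then invoke interpolation for the conclusion — which is more machinery than the literal statement needs. What is stated in \cite{DCL}, and what the compensated-compactness machinery actually requires, is the genuinely nontrivial version: \emph{precompact} in $W^{-1,q}$ and merely \emph{bounded} in $W^{-1,r}$, with $W^{-1,2}_{\mathrm{loc}}(\Omega)$ compactness as the conclusion; note that your key step uses only $\|g_k\|_{W^{-1,r}}\le M$, so your argument is really aimed at that version.

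Second, for that nontrivial version your argument has a real gap at the interpolation inequality on an arbitrary bounded open $\Omega$. The complex-interpolation route requires the identity $\bigl[W^{1,q'}_0(\Omega),\,W^{1,r'}_0(\Omega)\bigr]_\theta = W^{1,2}_0(\Omega)$, and the direction you need after dualizing, namely $W^{1,2}_0(\Omega)\hookrightarrow\bigl[W^{1,q'}_0(\Omega),\,W^{1,r'}_0(\Omega)\bigr]_\theta$, is precisely the hard direction of interpolating closed subspaces: zero extension gives a compatible isometric embedding into the $W^{1,p}(\mathbb{R}^n)$ scale, but it is not automatic that an analytic family hitting a given $u\in W^{1,2}_0(\Omega)$ at $z=\theta$ can be kept inside the zero-trace subspaces for rough $\partial\Omega$. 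Your second route has a parallel difficulty: ``regard $W^{-1,p}(\Omega)$ as a quotient of $W^{-1,p}(\mathbb{R}^n)$'' gives, via Hahn--Banach, a norm-controlled extension for each $p$ separately, but you need a \emph{single} extension $\tilde g$ that is simultaneously good in $W^{-1,q}(\mathbb{R}^n)$ and $W^{-1,r}(\mathbb{R}^n)$ before you can apply the Mikhlin multiplier $(I-\Delta)^{-1}$ and log-convexity; that simultaneous extension is not supplied. The standard resolution — and the reason the literature states the conclusion with $_{\mathrm{loc}}$ — is to localize: fix $\zeta\in C^\infty_c(\Omega)$, note that $\zeta f_k$ lives directly in $W^{-1,p}(\mathbb{R}^n)$ for all $p$ at once because $\phi\mapsto\zeta\phi$ maps $W^{1,p'}(\mathbb{R}^n)$ into $W^{1,p'}_0(\Omega)$, then set $u_k=(I-\Delta)^{-1}(\zeta f_k)$ on $\mathbb{R}^n$ and apply log-convexity of the $L^p$ norms to $u_k$ and $\nabla u_k$. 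This yields $W^{-1,2}_{\mathrm{loc}}(\Omega)$ compactness, which is exactly how the lemma is used in the paper (Theorem~\ref{Th2} concludes $H^{-1}_{\mathrm{loc}}$ compactness). If you want to keep your interpolation-based write-up, you should insert this localization step rather than asserting the interpolation identity on $\Omega$ itself.
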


Then we state our  following result on $H^{-1}$  compactness.
\begin{Theorem}\label{Th2}
Under the conditions of Theorem \ref{Th1}  and if the initial data
satisfies additionally the following condition:
$$\int_\mathbb{R}\frac{m_0^2+1}{2\rho_0}\leq C_0,$$
for some constant $C_0>0$, then
$\tilde{M}^l_x-\tilde{L}^l_t$ and $\tilde{N}^l_x-\tilde{M}^l_t$
are $H^{-1}_{loc}(\Omega)$ compact for  any open bounded set  $\Omega\subset\Pi_T$, where
$\tilde{L}^l=\rho^l$, $\tilde{M}^l=-m^l$, and $\tilde{N}^l=\frac{(m^l)^2-1}{\rho^l}$.
\end{Theorem}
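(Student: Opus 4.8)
The strategy is the standard "div-curl" compactness argument adapted to the Lax-Friedrichs approximate solutions, following \cite{DCL}. The two quantities $\tilde M^l_x-\tilde L^l_t = -m^l_x-\rho^l_t$ and $\tilde N^l_x-\tilde M^l_t = \left(\frac{(m^l)^2-1}{\rho^l}\right)_x+m^l_t$ are, up to sign, exactly the left-hand sides of the two equations in \eqref{Chap}. If the $U^l$ were exact solutions these would equal the source terms $-R$, $-S$, which are uniformly bounded (hence $L^\infty_{loc}$, hence $W^{-1,r}_{loc}$-compact for any $r<\infty$) by Theorem~\ref{Th1}. The only obstruction is the error produced by the scheme: the difference between the true conservation law and its fractional-step/Lax-Friedrichs discretization. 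So the plan is to write, for any test function $\varphi\in C_0^\infty(\Omega)$,
\begin{equation*}
\langle \rho^l_t+m^l_x,\varphi\rangle = -\iint_{\Pi_T}(\rho^l\varphi_t+m^l\varphi_x)\,dx\,dt
= \iint R(U^l_R,x,t)\varphi\,dx\,dt + E^l_1(\varphi) + E^l_2(\varphi),
\end{equation*}
and similarly for the second equation, where $E^l_1$ collects the Riemann-solver (Lax-Friedrichs averaging) errors across the cell interfaces $t=nh$ and $E^l_2$ collects the discrepancy between $H(U^l_R,x,t)(t-nh)$ and the true integral of the source along the trajectory. The first term on the right is bounded in $L^\infty_{loc}$ uniformly in $l$, hence lies in a compact subset of $W^{-1,r}_{loc}(\Omega)$ for each $r\in(2,\infty)$.

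The next step is to control $E^l_1$. As in \cite{DCL}, one splits $E^l_1 = E^l_{1,c} + E^l_{1,d}$, where $E^l_{1,c}$ is the part coming from the averaging at interior interfaces and $E^l_{1,d}$ from the discontinuities (CDWs) inside each strip $S_n$. The measure-theoretic estimate for $E^l_{1,d}$ is bounded in terms of the total variation / total wave strength, which is controlled by the decay of the $L^2$-type entropy; this is precisely where the extra hypothesis $\int_{\mathbb R}\frac{m_0^2+1}{2\rho_0}\,dx\le C_0$ enters. One shows, using the mechanical-energy inequality for the homogeneous Chaplygin system together with the uniform $L^\infty$ bound on $(\rho^l,m^l)$ and the smoothness and boundedness of $B,k$ on $[0,T]$ (so the source contributes only a Gronwall-type growth), that
\begin{equation*}
\sum_{n,j}\mathrm{osc}_{Q_{nj}}(U^l_R)^2\cdot l \;\le\; C(T,C_0,P(T))
\end{equation*}
uniformly in $l$. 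This gives $E^l_{1,c}\to 0$ strongly in $W^{-1,2}_{loc}$, while $E^l_{1,d}$ is bounded in the space of measures, hence (by Sobolev embedding, since $\mathcal M_{loc}\hookrightarrow W^{-1,q}_{loc}$ for $1<q<2$ in two space-time dimensions, actually $q<\tfrac{2}{1}$ read correctly as $W^{-1,q}$ with $q\in(1,2)$) lies in a compact subset of $W^{-1,q}_{loc}(\Omega)$. The term $E^l_2$ is $O(h)$ in $L^\infty_{loc}$ by Taylor expansion of the source along the cell (exactly the $O(h^2)$-per-step bookkeeping already used in the proof of Theorem~\ref{Th1}), so it too goes to zero in $W^{-1,r}_{loc}$.

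Finally, assemble: $\rho^l_t+m^l_x$ is the sum of a term in a fixed compact subset of $W^{-1,r}_{loc}(\Omega)$ and a term in a fixed compact subset of $W^{-1,q}_{loc}(\Omega)$, with $1<q<2<r<\infty$. It is also uniformly bounded in $W^{-1,\infty}_{loc}(\Omega)\subset W^{-1,r}_{loc}(\Omega)$ and, since $U^l$ is uniformly bounded in $L^\infty$, the distributional derivatives $\rho^l_t+m^l_x$ are bounded in $W^{-1,\infty}_{loc}$, a fortiori in the bounded sets of $W^{-1,r}_{loc}$. Applying Lemma~\ref{Th0} (Murat's lemma) yields that $\rho^l_t+m^l_x$, i.e. $-(\tilde M^l_x-\tilde L^l_t)$, lies in a compact subset of $W^{-1,2}_{loc}(\Omega)=H^{-1}_{loc}(\Omega)$. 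The identical argument applied to the second equation gives the compactness of $\tilde N^l_x-\tilde M^l_t$. The main obstacle is the uniform wave-strength / energy estimate that bounds $E^l_1$: one must verify that the inhomogeneous source, which is only controlled via Theorem~\ref{Th1} and may cause the $L^\infty$ bound (and hence the energy) to grow in $t$, still yields an $l$-independent bound on $[0,T]$ — this follows because $T$ is fixed and $B,k$ are fixed smooth functions, so all constants depend on $T$ but not on the mesh.
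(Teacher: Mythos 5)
Your proposal is correct and essentially mirrors the paper's own proof: both arguments derive a uniform total quadratic oscillation bound for the Lax--Friedrichs averaging by taking $\varphi\equiv1$ in the weak form with the convex entropy $\eta_*=\frac{m^2+1}{2\rho}$ (this is exactly where the hypothesis $\int_{\mathbb R}\frac{m_0^2+1}{2\rho_0}\le C_0$ enters), split the residual distribution into measure-bounded jump pieces plus $L^\infty$-bounded source/fractional-step errors, and then invoke Lemma~\ref{Th0} (Murat's interpolation between $W^{-1,q}$-compactness, $q<2$, and $W^{-1,\infty}$-boundedness) to conclude $H^{-1}_{loc}$ compactness, finally specializing to the conserved pairs $(\rho,m)$ and $(m,(m^2-1)/\rho)$. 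The paper's decomposition into $I_1,\dots,I_4$ (with $I_3=I_{31}+I_{32}+I_{33}$) is more granular than your $E^l_1,E^l_2$, but the ingredients and the logical skeleton coincide.
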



\begin{proof}
For simplicity of notations, we drop the  index $l$ in the approximate solutions $U^l=(\rho^l, m^l)$.
For any entropy and entropy flux pair
$(\eta(U), q(U))$ of \eqref{Chap2} satisfying
$\nabla\eta\nabla f=\nabla q$
and any
$\varphi(x,t)\in C^\infty_0(\mathbb{R}\times[0,T]),$ we have
\begin{equation}\label{1}
\int_0^T\int_\mathbb{R}\eta(U)\varphi_t+q(U)\varphi_xdxdt=I_1(\varphi)+
I_2(\varphi)+I_3(\varphi)+I_4(\varphi),
\end{equation}
where
\begin{eqnarray*}
I_1(\varphi)&=&\int_\mathbb{R}\varphi(x,T)\eta(U_R(x,T))dx-
\int_\mathbb{R}\varphi(x,0)\eta(U_R(x,0))dx,\\
I_2(\varphi)&=&\int_0^T\int_\mathbb{R}[\eta(U)-\eta(U_R)]\varphi_t+
[q(U)-q(U_R)]\varphi_xdxdt,\\
I_3(\varphi)&=&\sum_{j,n}\int_{(j-1)l}^{(j+1)l}
[\eta(U_R)(x,nh-0)-\eta(U_R)(jl,nh)]\varphi(x,nh)dx\\
&=&I_{31}(\varphi)+I_{32}(\varphi)+I_{33}(\varphi),\\
I_{31}(\varphi)&=&\sum_{j,n}\int_{(j-1)l}^{(j+1)l}
[\eta(U_R)(x,nh-0)-\eta(U)(x,nh-0)]\varphi(x,nh)dx,\\
I_{32}(\varphi)&=&\sum_{j,n}\int_{(j-1)l}^{(j+1)l}
[\eta(U_R)(jl,nh)-\eta(U)(x,nh-0)]\varphi(jl,nh)dx,\\
I_{33}(\varphi)&=&\sum_{j,n}\int_{(j-1)l}^{(j+1)l}
[\eta(U_R)(x,nh-0)-\eta(U)(jl,nh)][\varphi(x,nh)-\varphi(jl,nh)]dx,\\
I_4(\varphi)&=&\int_0^T\sum\{\sigma[\eta]_0-[q]_0\}\varphi(x(t),t)dt,\\
\end{eqnarray*}
 $S=\{(x(t),t)\}$ denotes the contact discontinuity wave in $U_R(x,t)$,  and $[\eta]_0,$
$[q]_0$ denote the jump across $S$ of $\eta(U_R(x,t))$, $q(U_R(x,t))$, respectively.

\smallskip
\noindent\emph{Step1:}  
For the entropy and entropy flux pair $$\eta_*=\frac{m^2+1}{2\rho}, \quad q_*=\frac{-m^3+m}{2\rho^2},$$  which is convex
in $(\rho,m)$,  one has, from the assumption on $(\rho_0,m_0)(x)$,
\begin{equation*}
\int_\mathbb{R}\eta_*(\rho_0,m_0)dx=\int_\mathbb{R}\frac{m_0^2+1}{2\rho_0}\leq C_0.
\end{equation*}
Taking $\varphi\equiv1, \; \eta=\eta_*,\;  q=q_*$ in (\ref{1}), we have
 $ |I_1(1)|\leq C,  I_2(1)=0, $ and $|I_3(1)|+|I_4(1)|\leq C$.
 Here and in the rest of
the paper $C$ denotes a generic  constant depending on $T$,
but independent of $j,n,h,l$.
For $I_3$, one has
\begin{eqnarray*}
I_3(1)&=&\sum_{j,n}\int_{(j-1)l}^{(j+1)l}[\eta_*(U_R(x,nh-0))-\eta_*(U_R(jl,nh))]dx\\
&=&\sum_{j,n}\int_{(j-1)l}^{(j+1)l}[\eta_*(U_R(x,nh-0)-\eta_*(U(x,nh-0))]dx\\
&&+\sum_{j,n}\int_{(j-1)l}^{(j+1)l}[\eta_*(U(x,nh-0))-\eta_*(U(jl,nh))]dx\\
&=&\sum_{j,n}\int_{(j-1)l}^{(j+1)l}\int_0^1\nabla\eta_*(U_R(x,nh-0)+
\theta(U(x,nh-0)-U_R(x,nh-0)))d\theta \\
&&\quad \times (U_R(x,nh-0)-U(x,nh-0))dx\\
&&+\sum_{j,n}\int_{(j-1)l}^{(j+1)l}\int_0^1(1-\theta)\nabla^2\eta_*(U(jl,nh-0)+
\theta(U(x,nh-0)-U(jl,nh))]d\theta\\
&&\quad \times (U_R(jl,nh-0)-U(x,nh-0))^2dx,
\end{eqnarray*}
where we have used the mean-value  formulas:
\begin{eqnarray*}
g(a)-g(b)&=&\int_0^1g^\prime(a+\theta(b-a))d\theta(a-b),\\
g(a)-g(b)&=&g^\prime(b)(a-b)+(1-\theta)\int_0^1g^{\prime\prime}(b+\theta(a-b))d\theta(a-b)^2,
\end{eqnarray*}
 and the construction $U_R(jl,nh)=\frac1{2l}\int_{(j-1)l}^{(j+1)l}U(x,nh-0)dx.$
 From the $L^\infty$ estimate in Section 3, we have
 $$|U_R(x,nh-0)-U(x,nh-0))|=|H(U_R(x, t), x, t)(t-nh)|\leq Ch,$$
 and then
 \begin{eqnarray*}
 &&\bigg|\sum_{j,n}\int_{(j-1)l}^{(j+1)l}\int_0^1\nabla\eta_*(U_R(x,nh-0)+
\theta(U(x,nh-0)-U_R(x,nh-0)))d\theta \\
&&\quad\times (U_R(x,nh-0)-U(x,nh-0))dx\bigg|\\
&&\leq\sum_{j,n}\int_{(j-1)l}^{(j+1)l}\int_0^1|\nabla\eta_*(U_R(x,nh-0)+
\theta(U(x,nh-0)-U_R(x,nh-0)))|d\theta dx\cdot Ch\\
&&\leq C.
\end{eqnarray*}
Since $\eta_*$ is a convex entropy, the Hessen matrix is positive definite,
i.e., there exists a constant $c_0>0$ such that  $\nabla^2\eta_*\geq c_0I$.
In addition, for the convex entropy and entropy flux pair $(\eta_*, q_*)$, we have the following properties similar to those in  \cite{DCL}:
$\sigma[\eta_*]_0-[q_*]_0\geq0,$ and
\begin{equation*}
\int_0^T\sum\{\sigma[\eta_*]_0-[q_*]_0\}dt\leq C.
\end{equation*}
Thus,
\begin{eqnarray*}
&&\sum_{j,n}\int_{(j-1)l}^{(j-1)l}|U_R(jl,nh)-U(x,nh-0)|^2dx\\
&&\leq\frac{1}{c_0}\sum_{j,n}\int_{(j-1)l}^{(j+1)l}\int_0^1(1-\theta)\nabla^2\eta_*(U(jl,nh-0)+
\theta(U(x,nh-0)-U(jl,nh))d\theta\\
&&\quad \times |U_R(jl,nh-0)-U(x,nh-0)|^2dx\\
&&\leq C.
\end{eqnarray*}

\noindent
\emph{Step 2:} For any bounded set $\Omega\subset\mathbb{R}\times[0,T],$  any weak entropy and entropy flux pair
$(\eta, q)$, and any $\varphi\in C^\infty_0(\Omega),$ 
the uniform bound implies that,
\begin{eqnarray*}
|I_1(\varphi)|&\leq& C||\varphi||_{C_0(\Omega)},\\
|I_4(\varphi)|&\leq& C||\varphi||_{C_0(\Omega)}\int_0^T\sum\{\sigma[\eta_*]_0-[q_*]_0\}dt\\
&\leq& C||\varphi||_{C_0(\Omega)},\\
|I_{31}(\varphi)|&\leq& C\sum_{j,n}\int_{(j-1)l}^{(j+1)l}|H(U_R(x,nh-0),x,t)h||\varphi(x,nh)|dx\\
&\leq&C||\varphi||_{C_0(\Omega)},\\
|I_{32}(\varphi)|&\leq&C||\varphi||_{C_0(\Omega)}\sum_{j,n}
\int_{(j-1)l}^{(j+1)l}|U_R(x,nh-0)-U_R(jl,nh)|^2dx\\
&\leq&C||\varphi||_{C_0(\Omega)},
\end{eqnarray*}
which yields that
\begin{equation}\label{2}
I_1+I_4+I_{31}+I_{32} \text{ is compact in } W^{-1,\alpha_1},
\end{equation}
for some $1<\alpha_1<2$.
Moreover,
\begin{eqnarray*}
|I_{33}(\varphi)|&\leq&\sum_{j,n}\bigg[\int_{(j-1)l}^{(j+1)l}
(\eta(U_R(x,nh-0))-\eta(U_R(jl,nh)))^2dx\bigg]^{\frac{1}{2}}\\
&& \quad\times\bigg[\int_{(j-1)l}^{(j+1)l}
(\varphi(x,nh-0)-\varphi(jl,nh))^2dx\bigg]^{\frac{1}{2}}\\
&\leq&C\sum_{j,n}\bigg[\int_{(j-1)l}^{(j+1)l}
(U_R(x,nh-0)-U_R(jl,nh))^2dx\bigg]^{\frac{1}{2}}\\
&&\quad\times ||\varphi||_{C_0^{\alpha_2}(\Omega)}
\bigg[\sum_{j,n}\int_{(j-1)l}^{(j+1)l}|x-jl|^{2\alpha_2}dx\bigg]^{\frac{1}{2}}\\
&\leq&C||\varphi||_{C_0^{\alpha_2}(\Omega)}l^{\alpha_2-\frac{1}{2}}\\
&\leq&C||\varphi||_{W_0^{1,p}(\Omega)}l^{\alpha_2-\frac{1}{2}}
\end{eqnarray*}
for some  $\frac{1}{2}<\alpha_2<1$ and $1<p<\frac{2}{1+\alpha_2}$,
which implies that
\begin{equation}\label{3}
I_{33}  \text { is compact in } W^{-1,p}.
\end{equation}
Since
 $$|I_2(\varphi)\leq Ch\int_0^T\int_\mathbb{R}
 |\varphi_t|+|\varphi_x|dxdt\leq Ch||\varphi||_{H_0^1(\Omega)},$$
then
\begin{equation}\label{4}
I_2  \text{ is compact in } H^{-1}_0(\Omega).
\end{equation}
From (\ref{2})-(\ref{4}),
\begin{equation}\label{5}
I_1+I_2+I_3+I_4 \text{ is compact in } W_0^{-1,r},
\end{equation}
where $r=\min\{p, 2, \alpha_1\}.$
In addition,
\begin{eqnarray*}
\int_0^T\int_\mathbb{R}\eta(U)\varphi_t+q(U)\varphi_xdxdt
\leq C||\varphi||_{W_0^{1,1}(\Omega)},
\end{eqnarray*}
then,
\begin{equation}\label{6}
I_1+I_2+I_3+I_4 \text{ is compact in } W^{-1,\infty}.
\end{equation}
Combining  (\ref{5}) and (\ref{6}), from the embedding Lemma
\ref{Th0}, we have
\begin{equation}\label{7}
\eta_t+q_x=I_1+I_2+I_3+I_4 \text{ is compact in } H^{-1}_{loc}(\Omega).
\end{equation}
Particularly, we choose
$(\eta, q)=(\rho, m)$ and $(\eta, q)=(m, \frac{m^2-1}{\rho})$
to conclude that
$$\tilde{M}_x-\tilde{L}_t, \; \tilde{N}_x-\tilde{M}_t
\text{ are } H^{-1}_{loc}(\Omega) \text{ compact}.$$
\end{proof}

\section{Main Theorem and Its Proof}

In this Section, we first state our main theorem and then provide its proof.

\begin{Theorem}[Main Theorem]\label{Th3}
Suppose that  $(M,g)$ is a 2-dimensional manifold with smooth metric $g=B(y)^2dx^2+dy^2$,
$(x, y)\in\mathbb{R}\times[-T,0]$, for any given $T>0,$ the  Gauss curvature is  negative
given by $K(y)=-k(y)^2$, where $k(y)$ and $B(y)$ are positive functions satisfying
\eqref{B}, and $\ln (B^2k)$ is a nondecreasing $C^{1,1}$ function.
 If the initial data satisfies the following conditions:
\begin{equation}\label{ini1}
\delta_0\leq w(x, 0)\leq P_0,~-P_0\leq z(x, 0)\leq0\,  (\text{or }
0\leq w(x, 0)\leq P_0,~-P_0\leq z(x, 0)\leq-\delta_0)
\end{equation}
for some constants $\delta_0>0, P_0>0$ and all $x\in\mathbb{R}$.
and
\begin{equation}\label{ini2}
\int_\mathbb{R}\frac{m(x,0)^2+1}{\rho(x,0)}\leq C_0,
\end{equation}
for some positive constants $\delta_0$, $P_0$ and $C_0$,
then \eqref{Chap}with the above given initial data
has a $L^\infty $ weak solution, thus the Gauss-Codazzi equations
has a $L^\infty $ weak solution.
\end{Theorem}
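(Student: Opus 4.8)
The plan is to combine the uniform $L^\infty$ bound of Theorem \ref{Th1}, the $H^{-1}_{loc}$ compactness of Theorem \ref{Th2}, and the compensated compactness framework of \cite{CSW} for the Gauss--Codazzi system, and then to pass from an $L^\infty$ weak solution of \eqref{Chap} to a $C^{1,1}$ isometric immersion via the fundamental theorem of surface theory (\cite{M1,M2}). First I would fix $T>0$, take $h\le h_0$ with $h_0$ from Theorem \ref{Th1} and a spatial mesh $l$ obeying the CFL condition, and let $U^l=(\rho^l,m^l)$ be the fractional Lax--Friedrichs approximate solutions constructed in Section 2 with the cut-off initial data $U_0\chi_l$. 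Theorem \ref{Th1} gives, uniformly in $l$, $0\le w^l\le P(T)$, $-P(T)\le z^l\le 0$, together with the one-sided lower bound $w^l\ge\delta_0 e^{-A(T)T}>0$ (symmetrically in the other case). Since $\rho^l=2/(w^l-z^l)$ and $m^l=(w^l+z^l)/(w^l-z^l)$, this yields
\[
\frac{1}{P(T)}\le\rho^l\le\frac{2}{\delta_0}\,e^{A(T)T},\qquad |m^l|\le 1 ,
\]
uniformly in $l$, so all $U^l$ stay in a fixed compact subset of the strictly hyperbolic region $\{\rho>0\}$, away from the vacuum, and the source $H(U^l,x,t)$ is uniformly bounded on $\Pi_T$.

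Next I would invoke Theorem \ref{Th2}: under the additional assumption \eqref{ini2}, for every weak entropy pair $(\eta,q)$ of \eqref{Chap2} the entropy dissipation $\eta(U^l)_t+q(U^l)_x$ is precompact in $H^{-1}_{loc}(\Omega)$ on every bounded open $\Omega\subset\Pi_T$; in particular for $(\eta,q)=(\rho,m)$ and $(\eta,q)=(m,(m^2-1)/\rho)$. With the uniform $L^\infty$ bound and this $H^{-1}_{loc}$ compactness, the two hypotheses of the compensated compactness framework of \cite{CSW} for the Gauss--Codazzi system are met, and that framework produces a subsequence (still denoted $U^l$) and a limit $U=(\rho,m)\in L^\infty(\Pi_T)$ with $\rho\ge 1/P(T)>0$ and $|m|\le1$ such that $U^l\to U$ a.e.\ and in $L^p_{loc}(\Pi_T)$ for every $p<\infty$. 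Passing to the limit in the weak formulation satisfied by the scheme — the flux $f(U^l)\to f(U)$ and the source $H(U^l,x,t)\to H(U,x,t)$ a.e.\ and boundedly (here the lower bound on $\rho^l$ is essential since $H$ involves the factor $(m^2-1)/\rho$), while the consistency and truncation errors are the terms $I_2,I_3,I_4$ estimated in the proof of Theorem \ref{Th2} and tend to zero — shows that $U$ satisfies
\[
\iint_{\Pi_T}\big(U\varphi_t+f(U)\varphi_x+H(U,x,t)\varphi\big)\,dx\,dt+\int_{\mathbb{R}}U_0(x)\varphi(x,0)\,dx=0
\]
for all admissible $\varphi$, i.e.\ $U$ is an $L^\infty$ weak solution of \eqref{Chap}.

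Finally I would translate back. Putting $\tilde L=\rho$, $\tilde M=-m$, $\tilde N=(m^2-1)/\rho$ and $L=k\tilde L$, $M=k\tilde M$, $N=k\tilde N$, the bounds $\rho\ge1/P(T)$ and $|m|\le1$ make $N\in L^\infty$ and the constraint $\tilde L\tilde N-\tilde M^2=-1$ hold a.e.; by the equivalence remarked after \eqref{Chap2}, $(L,M,N)$ is an $L^\infty$ weak solution of the Gauss--Codazzi system \eqref{GC} on $\mathbb{R}\times[-T,0]$. Since $L,M,N\in L^\infty$, $|g|=B^2$ stays positive, and the Gauss--Codazzi compatibility holds in the distributional sense, the $C^{1,1}$ version of the fundamental theorem of surface theory (\cite{M1,M2}) furnishes a $C^{1,1}$ isometric immersion of $(M,g)$ into $\mathbb{R}^3$ realizing $(L,M,N)$, which completes the proof.

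The step I expect to be the main obstacle is matching the hypotheses precisely to the framework of \cite{CSW}: one must check that the uniform bound keeps the generated Young measure supported in a fixed compact subset of $\{\rho>0\}$, away from the vacuum and inside the strictly hyperbolic region, so that its reduction to a Dirac mass — the heart of \cite{CSW} — applies, and that the inhomogeneous source does not spoil this reduction, entering only as an $L^\infty$ right-hand side already controlled in $H^{-1}_{loc}$ by the estimates above.
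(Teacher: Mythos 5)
Your proposal follows essentially the same route as the paper: uniform $L^\infty$ bound from Theorem~\ref{Th1}, $H^{-1}_{loc}$ compactness from Theorem~\ref{Th2}, and reduction to a weak solution via the compensated compactness framework of \cite{CSW}, followed by the fundamental theorem of surface theory. This is the intended argument.

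One imprecision worth flagging: you attribute the verification that the scheme is consistent (i.e.\ that the approximate solutions satisfy the weak formulation of \eqref{Chap2} up to errors that vanish as $l\to0$, which is condition (W.3) of the CSW framework) to ``the terms $I_2,I_3,I_4$ estimated in the proof of Theorem~\ref{Th2}.'' Those $I$-terms control the entropy dissipation $\eta(U^l)_t+q(U^l)_x$ and are what establish $H^{-1}_{loc}$ compactness; they do not by themselves show the weak form of the inhomogeneous system with the source $H$ is satisfied up to $o^l(1)$. In the paper this is a separate estimate: one writes the weak formulation applied to $U^l$, splits it into a main part $J_1$ (containing the jump of $U^l_R$ across time steps and the source integral) and a correction $J_2$ (the difference between $U^l$ and $U^l_R$ inside each time strip), and then bounds $|J_{11}|\lesssim l^{1/2}$, $|J_{121}|\lesssim l$, $|J_{122}|\lesssim l^{1/2}$ (using the lower bound $d_0\ge\delta>0$ on the ratios of constant-state intervals of the Riemann solution, coming from the $L^\infty$ bounds and the CFL condition), and $|J_2|\lesssim h$. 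You would need to carry out this consistency estimate explicitly rather than cite Theorem~\ref{Th2}; aside from that labeling issue, the structure of your argument matches the paper's.
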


\begin{proof}
As in  Section 2, let $y=t-T$, then $t\in[0,T].$ With the
results obtained in Sections 3 and 4, we can use the framework of \cite{CSW} to prove the above main theorem as follows.
\smallskip

\noindent
\emph{Step 1:} From Theorem \ref{Th1}, we have
\begin{equation*}
\begin{split}
&\delta_0e^{-A(T)T}\leq w^l(x, t)\leq P(T),~-P(T)\leq z^l(x, t)\leq0\\
&(\text{or }
0\leq w^l(x, t)\leq P(T),~-P(T)\leq z^l(x, t)\leq-\delta_0e^{-A(T)T}).
\end{split}
\end{equation*}
Note that $\rho^l=\frac{2}{w^l-z^l}$ and $m^l=\frac{w^l+z^l}{w^l-z^l},$ we get
$$\frac{1}{P(T)}\leq\rho^l(x, t)\leq\frac{2e^{A(T)T}}{\delta_0},~
|m^l(x,t)|\leq1.$$
Recalling the relations of the approximate solutions:
\begin{eqnarray*}
&&L^l(x,t)=k(t)\tilde{L}^l(x,t)=k(t)\rho^l(x,t),\\
&&M^l(x,t)=k(t)\tilde{M}^l(x,t)=-m^l(x,t)k(t),\\
&&N^l(x,t)=k(t)\tilde{N}^l(x,t)=\frac{(m^l(x,t))^2-1}{\rho^l(x,t)}k(t),
\end{eqnarray*}
one can also has the $L^\infty$ uniform boundedness of $L^l, M^l, N^l,$ since
$k(t)$ is a given $C^{1, 1}$ function.\par
By Theorem \ref{Th2} combining with the initial conditions \eqref{ini1},
\eqref{ini2} and the relation
$(L^l, M^l, N^l)=k(t)(\tilde{L}^l, \tilde{M}^l,\tilde{ N}^l)$, together
with $k(t)\in C^{1, 1},$
we can easily obtain that the approximate solutions $(L^l, M^l, N^l)$ are also
$H^{-1}_{loc}$ compact, and we omit the details.

\smallskip

\noindent
\emph{Step 2:} 
For any $\varphi=\varphi(x,t)\in C_0^\infty(\Pi_T),$
\begin{eqnarray*}
&&\iint_{\Pi_T}(U^l(x,t)\varphi_t+f(U^l(x,t))\varphi_x+H(U^l(x,t),x,t)\varphi)dxdt
+\int_\mathbb{R}\varphi(x,0)U^l(x,0)dx\\
&=&\iint_{\Pi_T}(U^l_R(x,t)\varphi_t+f(U^l_R(x,t))\varphi_x+H(U^l_R(x,t),x,t)\varphi)dxdt
+\int_\mathbb{R}\varphi(x,0)U^l(x,0)dx\\
&&
+\sum_{n}\int_{nh}^{(n+1)h}\int_\mathbb{R}\bigg[(U^l(x,t)-U^l_R(x,t))\varphi_t+
(f(U^l(x,t))-f(U^l_R(x,t))\varphi_x\\
&&\qquad\qquad\qquad\qquad  +(H(U^l(x,t),x,t)-H(U^l_R(x,t),x,t))\varphi\bigg]dxdt\\
&=&J_1+J_2.
\end{eqnarray*}
Since $U^l, \,U^l_R$ are uniformly bounded,
then
$$|U^l(x,t)-U^l_R(x,t)|=|H(U^l_R(x,t),x,t)(t-nh)|\leq Ch,$$
and
\begin{eqnarray*}
&&|H(U^l(x,t),x,t)-H(U^l_R(x,t),x,t)|\\
&\leq&C\left|\frac{(m^l(x,t))^2-1}{\rho^l(x,t)}-\frac{(m^l_R(x,t))^2-1}{\rho^l_R(x,t)}\right|\\
&& \quad+C|U^l(x,t)-U^l_R(x,t)|\\
&\leq&Ch+C\bigg[\frac{|(m_\theta(x,t))^2-1|}{(\rho_\theta(x,t))^2}|\rho^l(x,t)-\rho^l_R(x,t)|\\
&&\qquad\qquad +\frac{2|m_\theta|}{\rho_\theta}|m^l(x,t)-m^l_R(x,t)|\bigg]\\
&\leq& Ch,
\end{eqnarray*}
where $\rho_\theta$, $m_\theta$ are some functions with values between
$\rho^l$ and $\rho^l_R$, $m^l$ and $m^l_R$ respectively, satisfying
$$\frac{2|m_\theta|}{\rho_\theta}\leq C, \qquad\frac{|(m_\theta(x,t))^2-1|}{(\rho_\theta(x,t))^2}\leq C.$$
Since
\begin{eqnarray*}
|f(U^l(x,t))-f(U^l_R(x,t))|&\leq&\int_0^1|
f^\prime(U^l_R(x,t)+\theta(U^l(x,t)-U^l_R(x,t)))|
d\theta\\
&&\times |U^l(x,t)-U^l_R(x,t)|\\
&\leq& Ch,
\end{eqnarray*}
we then have
\begin{equation*}
|J_2|\leq Ch\iint_{\Pi_T}(|\varphi_t|+|\varphi_x|+|\varphi|)dxdt\leq Ch.
\end{equation*}
Rewrite $J_1$ as
\begin{eqnarray*}
J_1&=&\sum_{n}\int_\mathbb{R}\varphi(x,nh)(U^l_R(x,nh-0)-U^l_R(jl,nh))dx\\
&&\quad+\int_0^T\int_\mathbb{R}\varphi(x,t)H(U^l_R(x,t), x, t)dxdt\\
&=&\sum_{j,n}\int_{(j-1)l}^{(j+1)l}(\varphi(x,nh)-\varphi(jl,nh))(U^l_R(x,nh-0)-U^l_R(jl,nh))dx\\
&&+\sum_{j,n}\varphi(jl,nh)\int_{(j-1)l}^{(j+1)l}(U^l_R(x,nh-0)-U^l_R(jl,nh))dxdt\\
&&\quad+\int_0^T\int_\mathbb{R}\varphi(x,t)H(U^l_R(x,t), x, t)dxdt\\
&=&J_{11}+J_{12},
\end{eqnarray*}
where
\begin{eqnarray*}
|J_{11}|&\leq&||\varphi||_{C^1_0(\Pi_T)}\bigg(\sum_{j,n}\int_{(j-1)l}^{(j+1)l}|x-jl|^2dx\bigg)^{\frac{1}{2}}\\
&&\times\bigg(\sum_{j,n}\int_{(j-1)l}^{(j+1)l}|U^l_R(x,nh-0)-U^l_R(jl,nh)|^2dx\bigg)^{\frac{1}{2}}\\
&\leq&Cl^{\frac{1}{2}}||\varphi||_{C^1_0(\Pi_T)}\leq Cl^{\frac{1}{2}}
\end{eqnarray*}
from the a priori estimate in Section 4.
Inserting
$$U^l_R(jl,nh)=\frac{1}{2l}\int_{(j-1)l}^{(j+1)l}U^l(x,nh-0)dx$$ into $J_{12}$, we obtain
\begin{equation*}
\begin{split}
|J_{12}|&=\bigg|\int_0^T\int_\mathbb{R}\varphi(x,t)H(U^l_R(x,nh-0), x, t)dxdt
+\sum_{j,n}\varphi(jl,nh)\int_{(j-1)l}^{(j+1)l}\bigg[U^l_R(x,nh-0)\\
&\qquad-\frac{1}{2l}\int_{(j-1)l}^{(j+1)l}(U^l(x,nh-0)dx+H(U^l_R(x,t),x,nh-0))h\bigg]dx\bigg|\\
&=\bigg|-h\sum_{j,n}\varphi(jl,nh)\int_{(j-1)l}^{(j+1)l}
H(U^l_R(x,nh-0),x,nh-0)dx\\
&\qquad+\int_0^T\int_\mathbb{R}\varphi(x,t)H(U^l_R(x,nh-0), x, t)dxdt\bigg|\\
&\leq\sum_{j,n}\int_{(n-1)h}^{nh}\int_{(j-1)l}^{(j+1)l}\bigg|
(\varphi(x,t)-\varphi(jl,nh))H(U^l_R(x,t),x,t)\bigg|dxdt\\
&\quad +\sum_{j,n}\int_{(n-1)h}^{nh}\int_{(j-1)l}^{(j+1)l}\bigg|
\varphi(jl,nh)(H(U^l_R(x,t),x,t)-H(U^l_R(x,nh-0),x,nh-0))\bigg|dxdt\\
&=J_{121}+J_{122}.
\end{split}
\end{equation*}
For $J_{121},$ since $H(U^l_R(x,t),x,t)$ is bounded, we have
\begin{eqnarray*}
J_{121}&=&\sum_{j,n}\int_{(n-1)h}^{nh}\int_{(j-1)l}^{(j+1)l}
|\varphi(x,t)-\varphi(jl,nh)||H(U^l_R(x,t),x,t)|dxdt\\
&\leq&C\sum_{j,n}\int_{(n-1)h}^{nh}\int_{(j-1)l}^{(j+1)l}
||\varphi||_{C^1_0(\Pi_T)}(|x-jl|+|t-nh|)dxdt\\
&\leq&Cl||\varphi||_{C^1_0(\Pi_T)}\\
&\leq&Cl.
\end{eqnarray*}
For $J_{122},$  from the expression of $H(U,x,t)$, regularity of $\Gamma^i_{jk},$
and the $C^1$ continuity of $H(U,x,t)$ in its variables,  we have,
\begin{equation*}
\begin{split}
J_{122}&\leq C\sum_{(jl, nh)\in\text{ supp }\varphi}\int_{(n-1)h}^{nh}\int_{(j-1)l}^{(j+1)l}
|U^l_R(x,t)-U^l_R(x,nh-0)|+|t-nh|dxdt\\
&\leq C\left(\sum_{(jl, nh)\in\text{ supp }\varphi}\int_{(n-1)h}^{nh}dt\int_{(j-1)l}^{(j+1)l}
|U^l_R(x,t)-U^l_R(x,nh-0)|^2dx\right)^{\frac{1}{2}}+Cl.
\end{split}
\end{equation*}
To estimate
$$\int_{(j-1)l}^{(j+1)l} |U^l_R(x,t)-U^l_R(x,nh-0)|^2dxdt$$
we consider $U^l_R(x,t)$ which is the Riemann solution of (\ref{HO}) with the initial data:
\begin{eqnarray*}
U^l_R(x, (n-1)h)=\left\{\begin{array}{ll}
U_1, x<jl,\\
U_2, x>jl,
\end{array}\right.
\end{eqnarray*}
in the rectangle $\{(x,t): (n-1)h\leq t<nh, (j-1)l<x<(j+1)l\}.$
Due to the linear degeneracy,  $U^l_R$ is the  contact
discontinuity wave defined by the following:
\begin{eqnarray*}
U^l_R(x, t)=\left\{\begin{array}{ll}
U_1, \; x<jl+\lambda_k(U_1)(t-(n-1)h),\\
U_k(\epsilon), \; x>jl+\lambda_k(U_1)(t-(n-1)h),
\end{array}\right.
\end{eqnarray*}
where $k=1, 2$,  and $U_k(\epsilon)$ is the contact discontinuity curve satisfying
\begin{eqnarray*}
\left\{\begin{array}{ll}
\frac{U_k(\epsilon)}{d\epsilon}=r_k(U_k(\epsilon)),\\
U_k(0)=U_1.
\end{array}\right.
\end{eqnarray*}

Therefore, from Figure 2 below and definition of contact discontinuity wave,  $U^l_R(x, nh-0)$ is  a function of three pieces of constants
in the rectangle. From Lemma  4 in \cite{DCL}, one has
\begin{equation*}
\int_{(j-1)l}^{(j+1)l}|U^l_R(x, nh-0)-U^l_R(jl, nh)|^2dx\geq C d_0^3l\sum|\varepsilon(U^l_R(x, nh-0))|^2,
\end{equation*}
where $\sum|\varepsilon(U^l_R(x, nh-0))|$ denotes all of the jump strength of $U^l_R(x, nh-0)$ over  $((j-1)l, (j+1)l),$ and
$d_0$ denotes the minimum of ratios of lengths of constant state intervals of $2l$ for  $U^l_R(x, nh-0).$
\begin{figure}[th]
\begin{center}
  \includegraphics[width=12cm,height=8cm]{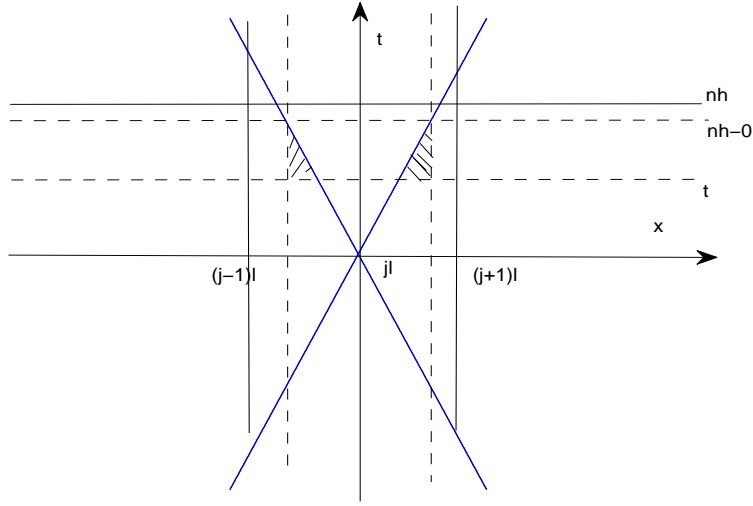}
  \caption{Graph of integral region }\label{figure}
  \end{center}
\end{figure}

Hence
\begin{equation*}
\begin{split}
d_0&=\frac{1}{2l}\min\{l+\lambda_1h, (\lambda_2-\lambda_1)h, l-\lambda_2h\}\\
&=\min\left\{\frac{1}{2}+z^l\frac{h}{2l} , \frac{2}{\rho^l}\frac{h}{2l},  \frac{1}{2}-w^l\frac{h}{2l}\right\}\\
&\geq\min\left\{\frac{1}{2}-P(T)\frac{l}{2h},  \delta_0e^{-A(T)T}\frac{h}{2l}\right\}:=\delta,
\end{split}
\end{equation*}
and then
\begin{equation*}
\begin{split}
&\int_{(j-1)l}^{(j+1)l}|U^l_R(x, t)-U^l_R(jl, nh-0)|^2dx\leq Cl\sum|\varepsilon(U^l_R(x, nh-0))|^2\\
&~~\leq C\delta^{-3} \int_{(j-1)l}^{(j+1)l}|U^l_R(x, nh)-U^l_R(jl, nh)|^2dx.
\end{split}
\end{equation*}
Thus,
 \begin{equation*}
 \begin{split}
  J_{122}&\leq C\left(\sum_{(jl, nh)\in\text{ supp }\varphi}\int_{(n-1)h}^{nh}dt
  C\delta^{-3}  \int_{(j-1)l}^{(j+1)l}|U^l_R(x, nh-0)-U^l_R(jl, nh)|^2dx\right)^{\frac{1}{2}}\\
 & \leq Cl^{\frac{1}{2}}+Cl.
 \end{split}
 \end{equation*}

From  all the above estimates, we have
$$J_1+J_2=|J_{11}|+J_{121}+J_{122}+|J_{2}|\leq Ch+Cl^{\frac{1}{2}}\le Cl+Cl^\frac12.$$
Then
\begin{eqnarray*}
&&\iint_{\Pi_T}U^l(x,t)\varphi(x,t)_t+f(U^l(x,t))\varphi(x,t)_x\\
&&\qquad+H(U^l(x,t),x,t)\varphi(x,t)dxdt+\int_\mathbb{R}U^l(x,0)\varphi(x,0)dx\\
&&\rightarrow0 \text{ as }l\rightarrow 0.
\end{eqnarray*}
That is,  there exists $o^l(1)=(o_1^l(1),o_2^l(1))^\top$ with $o^l(1)\rightarrow0$
 in the sense of distributions as $l\rightarrow0$ such that
$$U(x,t)_t+f(U(x,t))_x=H(U(x,t),x,t)+o^l(1),$$
which leads to the following
\begin{eqnarray*}
&& M^l_x-L^l_y=\Gamma^2_{22}L^l-\Gamma^2_{12}M^l+\Gamma^2_{11}N^l+o_1^l(1),\\
&& N^l_x-M^l_y=-\Gamma^1_{22}L^l+\Gamma^1_{12}M^l-\Gamma^1_{11}N^l+o_2^l(1),\\
&& L^lN^l-(M^l)^2=K,
\end{eqnarray*}
since $k(t)$ is given in $C^{1, 1}([0, T]).$

\smallskip

\noindent
\emph{Step 3:}  From the above two steps,  we conclude that the approximate solutions constructed by
the  Lax-Friedrichs scheme satisfy all the conditions of
the following compensated compactness framework in \cite{CSW}:
\begin{Lemma}\label{lemma2}
Let a sequence of functions $(L^\varepsilon, M^\varepsilon, N^\varepsilon)(x,y)$,
defined on an open subset $\Omega\subset\mathbb{R}^2$, satisfy the
following framework:
\begin{itemize}
\item[(W.1)]$(L^\varepsilon, M^\varepsilon, N^\varepsilon)(x,y)$ is uniformly
bounded almost everywhere in $\Omega\subset\mathbb{R}^2$ with respect to $\varepsilon$;
\item[(W.2)]$M^\varepsilon_x-L^\varepsilon_y$ and $N^\varepsilon_x-M^\varepsilon_y$ are
compact in $H^{-1}_{loc}(\Omega)$;
\item[(W.3)]There exist $o_j^\varepsilon(1)$, $j=1,2,3$, with $o_j^\varepsilon(1)\rightarrow0$
in the sense of distributions as $\varepsilon\rightarrow0$ such that
\begin{equation}\label{c}
\begin{split}
 &M_x^\varepsilon-L_y^\varepsilon=\Gamma^2_{22}L^\varepsilon-2\Gamma^2_{12}M^\varepsilon
 +\Gamma^2_{11}N^\varepsilon+o_1^\varepsilon(1),\\
 &N_x^\varepsilon-M_y^\varepsilon=-\Gamma^1_{22}L^\varepsilon+2\Gamma^1_{12}M^\varepsilon
 -\Gamma^1_{11}N^\varepsilon+o_2^\varepsilon(1),\\
 \end{split}
\end{equation}
and
\begin{equation}\label{G}
L^\varepsilon N^\varepsilon-(M^\varepsilon)^2=K+o_3^\varepsilon(1).
\end{equation}
\end{itemize}
Then there exists a subsequence (still labeled) $(L^\varepsilon, M^\varepsilon, N^\varepsilon)$
converging weak-star in $L^\infty$ to $(L, M, N)(x,y)$ as $\varepsilon\rightarrow0$ such that
\begin{itemize}
\item[(1)]$ (L, M, N)$ is also bounded in $\Omega\subset\mathbb{R}^2$;
\item[(2)]the Gauss equation (\ref{G}) is weakly continuous with respect to the
 subsequence $(L^\varepsilon, M^\varepsilon, N^\varepsilon)$ converging weak-star in
 $L^\infty$ to $(L, M, N)(x,y)$ as $\varepsilon\rightarrow0$;
\item[(3)] The Codazzi equations (\ref{c})  as $\varepsilon\to 0$ hold for $(L, M, N)$ in the sense
of distribution.
\end{itemize}
\end{Lemma}
Therefore, Lemma \ref{lemma2} implies that the Gauss-Codazzi system
has a $L^\infty$ weak solution in the sense of
distributions,  which yields a $C^{1,1}$ isometric immersion into $\mathbb{R}^3.$
The proof of the main theorem is complete. Besides, \eqref{Chap}
with the above given initial data also has a $L^\infty$ weak solution.
\end{proof}

\bigskip

\section*{Acknowledgments}
F. Huang's research was supported in part  by NSFC Grant No. 11371349,
National Basic Research Program of China (973 Program) under Grant
No. 2011CB808002, and the CAS Program for Cross $\&$ Cooperative
Team of the Science $\&$ Technology Innovation.
D. Wang's research was supported in part by the NSF Grant DMS-1312800 and NSFC Grant No. 11328102.
The authors would like to thank the referees for careful reading of the manuscript and for valuable comments.
\bigskip

\end{document}